\newtheorem{theorem}{Theorem}[section]
\newtheorem{proposition}[theorem]{Proposition}
\newtheorem{lemma}[theorem]{Lemma}
\newtheorem{remark}[theorem]{Remark}
\newtheorem{example}[theorem]{Example}
\newtheorem{corollary}[theorem]{Corollary}
\newcommand{\R}{\mathbb{R}}
\newcommand{\N}{\mathbb{N}}
\newcommand{\RR}{\mathbb{R}}
\newcommand{\Ss}{\mathbb{S}}
\newcommand{\NN}{\mathbb{N}}
\newcommand{\e}{{\,\mathrm{e}}}
\newcommand{\dist}{{\mathrm{d}}}
\newcommand{\tT}{\mathrm{T}}
\DeclareMathOperator*{\argmin}{arg\,min}
\DeclareMathOperator*{\argmax}{arg\,max}
\DeclareMathOperator{\dom}{dom}
\begin{document}
\title{On the Robust PCA and Weiszfeld's Algorithm}

\author{Sebastian Neumayer\footnotemark[1]
\and
Max Nimmer\footnotemark[1] 
\and 
Simon Setzer\footnotemark[3] 
\and 
Gabriele Steidl\footnotemark[1] \footnotemark[2]}

\maketitle
\footnotetext[1]{Department of Mathematics,
	Technische Universit\"at Kaiserslautern,
	Paul-Ehrlich-Str.~31, D-67663 Kaiserslautern, Germany,
	\{nimmer,steidl\}@mathematik.uni-kl.de.
	} 
\footnotetext[2]{Fraunhofer ITWM, Fraunhofer-Platz 1,
	D-67663 Kaiserslautern, Germany}
\footnotetext[3]{Engineers Gate, London, United Kingdom}

\begin{abstract}
    The principal component analysis (PCA) is a powerful standard tool for
		reducing the dimensionality of data.
		Unfortunately, it is sensitive to outliers so that various robust PCA variants 
		were proposed in the literature.
		This paper addresses the robust PCA  
		by successively determining the directions of lines
		having minimal Euclidean distances
		from the data points.
		The corresponding energy functional is not differentiable at a finite number of directions
		which we call anchor directions. 
		We derive a Weiszfeld-like algorithm for minimizing the energy functional
		which has several advantages over existing algorithms.
		Special attention is paid to the careful handling of the anchor directions, where
		we take the relation between local minima and 
		one-sided derivatives of Lipschitz continuous functions on submanifolds of $\mathbb R^d$ 
		into account.
		Using ideas for stabilizing the classical Weiszfeld algorithm at anchor points and the
		Kurdyka–Łojasiewicz property of the energy functional,
		we prove global convergence of the whole sequence of iterates generated by the algorithm to a
		critical point of the energy functional.
		Numerical examples demonstrate the very good performance of our algorithm.
\end{abstract}

\section{Introduction} \label{sec:intro}

Principal component analysis (PCA) \cite{Pearson1901} is an important tool for 
dimensionality reduction of data which is often 
applied as a pre-processing step, 
e.g., for classification or segmentation.
The procedure provides dimensionality reduction by projecting the data onto a linear subspace 
maximizing the variance of the projection or, 
equivalently minimizing the squared Euclidean distance error to the subspace.
More precisely, let $N \ge d $ data points $x_1,\ldots,x_N \in \mathbb R^d$ be given.
By $\| \cdot\|$ we denote the Euclidean norm and by $I_d$ the $d \times d$ identity matrix.
PCA finds a $K$-dimensional affine subspace
$\{\hat A \, t + \hat b: t \in \mathbb R^K\}$, $1 \le K \le d$,
having smallest squared Euclidean distance from the data:
\begin{equation} \label{PCA_1}
(\hat A, \hat b) \in \argmin_{A \in \mathbb R^{d,K}, b \in \mathbb R^d} \sum_{i=1}^N \min_{t \in \mathbb R^K} \|A\, t + b - x_i \|^2.
\end{equation}
While $\hat A$ and $\hat b$ in the above minimization problem are not unique,
the affine subspace itself is uniquely determined if the empirical covariance matrix only has eigenvalues of multiplicity one, and 
goes through the offset(bias) 
$\bar b := \frac{1}{N}(x_1 + \ldots + x_N)$.
Therefore we can reduce our attention to data points 
$y_i := x_i - \bar b$, $i=1,\ldots,N$ and subspaces through the origin minimizing the squared Euclidean distances to the $y_i$, $i=1,\ldots,N$.
Setting further the gradient of the inner function in \eqref{PCA_1} with respect to $t \in \mathbb R^K$ to zero 
and adding the constraint of $A$ being in the Stiefel manifold $\mathbb S_{d,K} = \{ A\in \mathbb R^{d,K}:\ A^\tT A=I_K\}$ to eliminate some redundancies
the problem reduces to
\begin{equation}
\hat A \in \argmin_{A \in \mathbb S_{d,K}} \sum_{i=1}^N \|P_A y_i \|^2,
\end{equation}
where $P_A := I_d - A A^\tT$ denotes the orthogonal projection onto 
$\mathcal{R}(A)^\perp = \mathcal{N}(A^\tT)$.

One of the most important properties of PCA is the nestedness of the PCA subspaces, i.e.,
for $K < \tilde K \le d$,
the optimal $K$-dimensional PCA subspace is contained in the $\tilde K$-dimensional one. 
In particular, the directions forming the columns of $\hat A = (\hat a_1 \; \ldots \; \hat a_K)$ 
can be found successively by computing for $k=0,1,\ldots,K-1$, 
\begin{align}
\hat a_{k+1} 
&= \argmin_{\|a\|=1} \sum_{i=1}^N \| P_a \, P_{\hat A_k} y_i \|^2   \label{pcd_succ_1}
\end{align}
or equivalently
\begin{align}
\hat a_{k+1} 
&= \argmax_{\|a\|=1} \sum_{i=1}^N  \langle a, P_{\hat A_k} y_i\rangle^2\label{pcd_succ_2}
\end{align}
where 
$\hat A_k := (\hat a_1 \; a_2 \; \ldots \; \hat a_k)$, $k=1,2,\ldots,K-1$, 
and
$P_{\hat A_0} = I_d$, see, e.g., \cite{HTF01}.
The first problem \eqref{pcd_succ_1} focuses on the minimization of the residual, while the second one \eqref{pcd_succ_2}
underlines the maximization of the variance in the PCA direction.

\begin{figure}
\centering
	\includegraphics[width=0.3\textwidth]{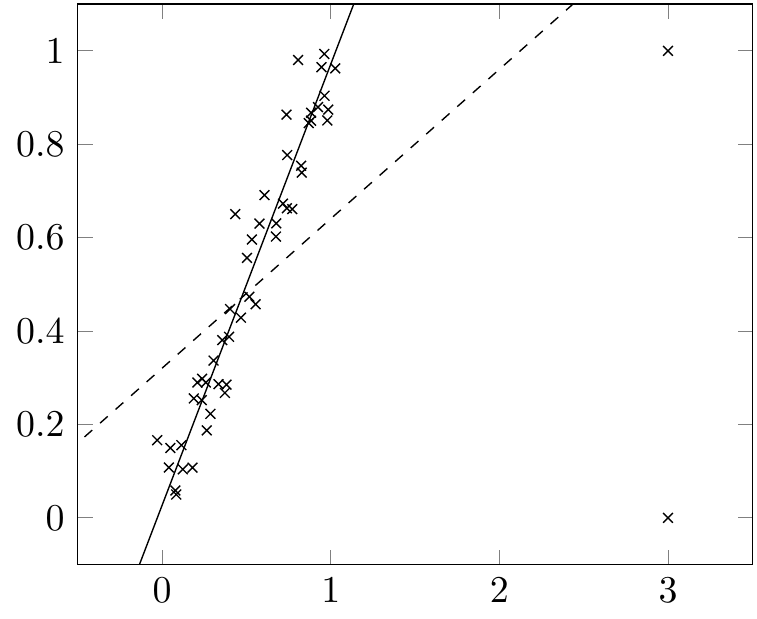}
	\caption{Illustration of the effect of outliers on the PCA. The data set consists of 
	50 points lying approximately on a line from $(0,0)^\tT$ to $(1,1)^\tT$ and two outliers. 
	The solid line is the result of PCA without outliers and the dashed line with all points.}\label{fig:outliers}
\end{figure}

Unfortunately, PCA is sensitive to outliers in the data, see  Fig.~\ref{fig:outliers}.
One possibility to circumvent the problem is to remove outliers before computing the principal components. 
However, in some contexts, outliers are difficult to identify and 
other data points are incorrectly given outlier status
forcing a large number of deletions before a reliable estimate can be found.

Therefore, quite different methods were proposed in the literature to make PCA robust,
in particular in robust statistics, see the books \cite{HR2009,MMY2006,RL1987}.
One approach consists in assigning different weights to data points based on their estimated relevance,
to get a weighted PCA, see, e.g. \cite{KKSZ08}. 
The RANSAC algorithm \cite{fischler1987random} repeatedly estimates the model parameters from a random subset of the data points until 
a satisfactory result is obtained as indicated by the number of data points within a certain error threshold. 
In a similar vein, least trimmed squares PCA models \cite{rousseeuw2005robust,podosinnikova2014robust} 
aim to exclude outliers from the squared error functional, but in a deterministic way. 
Another possible approach is to minimize the median of the squared errors as in \cite{massart1986least}.

The variational model of Candes et al. \cite{candes2011robust}
decomposes the data matrix $Y = (y_1 \; \ldots \; y_N)$ 
into a low rank and a sparse part by minimizing
$$
\argmin_{L,S} \|L\|_* + \lambda \|S\|_1 \quad \mbox{subject to} \quad Y = L + S,
$$
exploiting the nuclear norm $\|L\|_*$ of $L$ and the sum of the absolute values of entries $\|S\|_1$. 
Then $L$ can be considered as robust part, while $S$ addresses the outliers. 
Related approaches as \cite{MT2011,XCS2012} separate the low rank component from the column sparse one using similar norms. 

Another group of robust PCA approaches replaces the squared $\ell_2$ norm in PCA by the $\ell_1$ norm.
Then the minimization of the energy functional can be addressed by linear programming, see, e.g.,  
Ke and Kanade \cite{ke2005robust}. Unfortunately, this norm is not rotationally invariant. 

Mathematically interesting approaches follow \eqref{PCA_1} -  \eqref{pcd_succ_2}, but skip the squares
in the Euclidean distances and the inner products to find more robust directions.
Taking pure Euclidean distances has several consequences. First of all, the energy functionals become
non-differentiable at a finite number of subspaces spanned by matrices $A$, resp. directions $a$, which we collect within the so-called \emph{anchor set}.
Further, the offset $\hat b$ in 
\begin{equation} \label{PCA_rob_all}
(\hat A, \hat b) \in \argmin_{A \in \mathbb R^{d,K}, b \in \mathbb R^d} \sum_{i=1}^N \min_{t \in \mathbb R^K} \|A\, t + b - x_i \|.
\end{equation}
cannot be simply determined, see our small discussion in Section \ref{sec:offset}.
Let us assume that an offset $\hat b$ is given, so that we can restrict our attention to the data $y_i = x_i- \hat b$ $i=1,\ldots,N$
and \eqref{PCA_rob_all} becomes
\begin{equation} \label{pca_robust_1}
\hat A \in \argmin_{A \in \mathbb S_{d,K}} \sum_{i=1}^N  \|P_A y_i \|.
\end{equation}
Even then we lose the nested subspace property of the classical PCA, so that in particular 
\begin{align}
\hat a_{k+1} 
&= \argmin_{\|a\|=1} \sum_{i=1}^N \| P_a \, P_{\hat A_k} y_i \|   \label{pcd_robust_2}
\end{align}
and
\begin{align}
\hat a_{k+1} 
&= \argmax_{\|a\|=1} \sum_{i=1}^N  |\langle a, P_{\hat A_k} y_i\rangle|   \label{pcd_robust_3}
\end{align}
have in general nothing to do with the columns of the matrix $\hat A$ obtained in \eqref{pca_robust_1}.
Finally, the residual minimizing point of view \eqref{pcd_robust_2} leads to different results than
the variance maximizing one in \eqref{pcd_robust_3}, see Fig.~\ref{fig:ex_kwak}.

The models \eqref{pca_robust_1} - \eqref{pcd_robust_3} were considered in the literature.
The maximization of \eqref{pcd_robust_3}
was suggested with more general scalable functions than just the absolute value 
by Huber \cite[p. 203]{Huber1981} and studied in detail as PP-PCA by Li and Chen \cite{LS1985}.
It was reinvented and tackled with a greedy algorithm in \cite{kwak2008principal}
and the greedy algorithm was made more robust using median computations in \cite{HFB2014}.
For other methods in this direction, see also \cite{MT2011,nie2011robust}. 
In \cite{HFB2014} it was pointed out that the variance maximizing method in \cite{kwak2008principal} lacks a certain robustness since
it still involves mean computations. This was already demonstrated in Fig.~\ref{fig:ex_kwak}.

\begin{figure}[ht]
		\centering
		\includegraphics[width=0.5\textwidth]{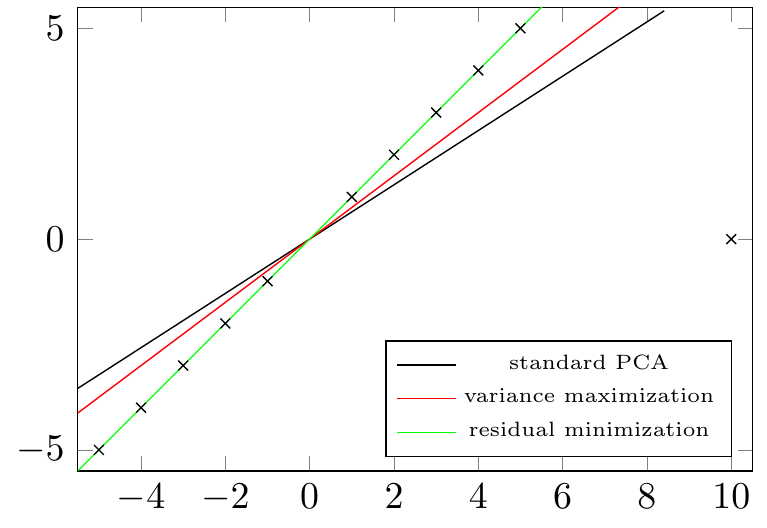}
		\caption{Results of standard PCA (black), the robust variance maximizing approach
		\eqref{pcd_robust_3} (red), and the residual minimizing method \ref{pcd_robust_2} (green). 
		The last one is closest to the line and nearly not influenced by the outlier.}\label{fig:ex_kwak}
	\end{figure}

Model \eqref{pca_robust_1} was treated by Ding et al. \cite{ding2006r}, 
where the authors circumvented the anchor set by smoothing the
original energy functional. The paper gives no convergence analysis of the proposed algorithm.
A tight convex relaxation approach for \eqref{pca_robust_1}, called REAPER was suggested in \cite{LCTZ15}. 
The relaxation replaces the condition that the symmetric positive semidefinite matrix
$A A^\tT$ has eigenvalues in $\{0,1\}$ by the condition of eigenvalues in $[0,1]$. This blows the problem size up.
Numerically the relaxed problem can be solved via an iteratively re-weighted least squares algorithm.
Usually this requires again a smoothing of the relaxed convex, but still non-differentiable functional.

In this paper, we are interested in the residual minimizing approach \eqref{pcd_robust_2}.
Recently, a minimization algorithm was  
published by Keeling and Kunisch \cite{KK2016}. 
Local convergence of their algorithm to a local minimizer 
was proved if the two parameters within the algorithm are chosen
appropriately without a concrete specification of their range.
The outcome of the algorithm is very sensitive to the choice of the parameters. 
We propose a minimization algorithm which is completely different from those in \cite{KK2016}.
It is based on ideas of the classical Weiszfeld algorithm \cite{We37} for computing the geometric median 
of points in $\mathbb R^d$ and has the advantage that
no parameters have to be tuned. In non-anchor directions the algorithm can be considered as gradient descent algorithm
on the sphere, where the length of the gradient descent is automatically given. The treatment of anchor directions relies
on one-sided directional derivatives of the energy functional. 
We show that such derivatives can be used to characterize local minima
on submanifolds of $\mathbb R^d$ of locally Lipschitz continuous functions which is interesting on its own.
We prove global convergence of our algorithm to a critical point of the energy functional, where we take special care of the anchor set.

\paragraph{Outline of the paper}
In the next Section \ref{sec:prelim}, we recall the Weiszfeld algorithm for computing the geometric median of given data points.
Properties of the energy function, critical point conditions and
the minimization algorithm are developed in Section \ref{sec:critical}.
The main part of the paper is the convergence analysis of our algorithm in Section \ref{sec:conv}.
Some remarks on the offset are given in Section \ref{sec:offset}.
Numerical examples demonstrate the performance of our algorithm in Section \ref{sec:numerics}.
The paper ends with conclusions and ideas for future work in 
Section \ref{sec:concl}.
The Appendix \ref{sec:app} provides 
a criterion for determining local minimizers of  locally Lipschitz continuous  functions
on embedded manifolds in $\mathbb R^d$ which is applied in  Section  \ref{sec:critical}.

\section{Weiszfeld's Algorithm for Geometric Median Computation} \label{sec:prelim}
We start with a small review of Weiszfeld's algorithm with two aims:
first, the geometric median usually replaces the mean as offset in robust PCA methods.
Second, having the original Weiszfeld algorithm in mind
helps to understand the basic intention of our algorithm for minimizing \eqref{pcd_robust_2}.

The \emph{geometric median} $\hat x \in \mathbb R^d$ 
of pairwise distinct points $x_i \in \mathbb R^d$, $i=1,\ldots,N$, which are not aligned,
is uniquely determined by
$$
\hat x := \argmin_x \mathcal{ E}(x) := \argmin_x \sum_{i=1}^N \|x-x_i\|.
$$
An efficient algorithm for solving the geometric median problem 
is the Weiszfeld algorithm which goes back to the Hungarian mathematician A. Vazsonyi (Weiszfeld) \cite{Vaz2002,We37}
and can be also seen as a special maximizing-minimizing algorithm, see, e.g. \cite{CIS2011}.
In \cite{Ku73,KK1962} it was recognized that the original algorithm of Weiszfeld 
fails if an iterate produced by the algorithm belongs to the so-called anchor set 
$\mathcal{A} := \{x_1,\ldots,x_N\}$ 
consisting of the points where  $\mathcal{ E}$ is not differentiable.
For bypassing the anchor  points the most natural way is to define an appropriate descent direction 
of $\mathcal{E}$ in those points \cite{OL1978,VZ2001}.
To derive the algorithm recall that
the function $\mathcal{ E}$ is convex and by Fermat's rule the vector $\hat x \in \mathbb R^d$ 
is a the minimizer of $\mathcal{ E}$ if and only if
$$
0 \in \partial \mathcal{ E} (\hat x) 
= 
\left\{
\begin{array}{ll}
\nabla \mathcal{ E} (\hat x) = \sum\limits_{i=1}^N \frac{\hat x-x_i}{\|\hat x-x_i\|}       
& \mbox{if} \;  \hat x \not \in \mathcal{A},\\[1ex]
\sum\limits_{{i=1 \atop x_i \not = \hat x}}^N \frac{\hat x-x_i}{\|\hat x-x_i\|} + \overline{B_1(0)} 
& \mbox{if} \;  \hat x \in \mathcal{A}.
\end{array}
\right. 
$$
where $\partial \mathcal{ E}$ denotes the subdifferential of $\mathcal{ E}$ and $\overline{B_1(0)}$ 
the closed Euclidean ball around zero with radius 1.
Thus, a minimizer $\hat x \not \in \mathcal{A}$ has to fulfill the fixed point equation
\begin{align}
\hat x &=  \Big(\sum_{i=1}^N \frac{1}{\|\hat x-x_i\|} \Big)^{-1} \sum_{i=1}^N \frac{x_i}{\|\hat x-x_i\|} \nonumber\\
        &= \hat x - \Big(\sum_{i=1}^N \frac{1}{\|\hat x-x_i\|} \Big)^{-1} 
				\sum_{i=1}^N \frac{\hat x-x_i}{\|\hat x-x_i\|}, \label{fix}
\end{align}
while $\hat x \in \mathcal{A}$ is a minimizer if and only if
\begin{equation} \label{global_min}
\| \sum\limits_{{i=1 \atop x_i \not = \hat x}}^N \frac{\hat x-x_i}{\|\hat x-x_i\|} \| \le 1.
\end{equation}
The Weiszfeld algorithm is an iterative algorithm which produces a sequence $\{x^{(r)}\}_r$ as follows:
if $x^{(r)} \not \in \mathcal{A}$, then we apply the Picard iteration belonging to \eqref{fix},
$$
x^{(r+1)} = x^{(r)} - \underbrace{\Big(\sum_{i=1}^N \frac{1}{\|x^{(r)}-x_i\|} \Big)^{-1} }_{s_r^{-1}}
				\underbrace{\sum_{i=1}^N \frac{x^{(r)}-x_i}{\|x^{(r)}-x_i\|}}_{\nabla \mathcal{ E} (x^{(r)})} .				
$$
This is a gradient descent step with special step size $s_r^{-1}$.
If $x^{(r)} \in \mathcal{A}$, i.e. $x^{(r)} = x_k$ for some $k\in \{1,\ldots,N\}$ 
and fulfills the minimality condition \eqref{global_min}, then the algorithm stops;
otherwise we perform a descent step in direction of the subgradient 
in $\partial {\mathcal E}(x^{(r)})$ which is closest to zero
$$
x^{(r+1)} := x^{(r)} - \Big(\sum\limits_{i=1 \atop i \not = k}^N \frac{1}{\|\hat x-x_i\|} \Big)^{-1} \left(1- \frac{1}{\|G_k\|} \right) G_k
$$
where $G_k := \sum\limits_{{i=1 \atop i \not = k}}^N \frac{x_k-x_i}{\| x_k-x_i\|} \in \partial {\mathcal E}(x_k)$.

Local and asymptotic convergence rates of the Weiszfeld algorithm were given in \cite{Ka74} 
and a non-asymptotic sublinear convergence rate was proved in \cite{BS2015}.
The very good performance of Weiszfeld's algorithm in comparison with the parallel proximal point algorithm
was shown in \cite{SST2012} and a projected Weiszfeld algorithm was established in \cite{NNS2017}.
Keeling and Kunisch \cite{KK2016} suggested another stable algorithm for finding the geometric mean
based on criticizing the behavior of the original Weiszfeld algorithm in anchor points and not taking
its stabilized versions into account.
A good reference on past and ongoing research in this direction is \cite{BS2015} and the references therein.

\section{Weiszfeld-like Algorithm for Robust PCA} \label{sec:critical}
We consider the minimization approach \eqref{pcd_robust_2}.
First of all we see in the next remark that the direction $a_{k+1}$ is indeed
perpendicular to the previous directions $\{a_1,\ldots,a_k\}$.

\begin{remark} \label{lem:start}
Let $\varphi:\mathbb R_{\ge 0} \rightarrow \mathbb R$ be a strictly increasing function.
In our application we are interested in $\varphi(x) = x^\frac12$.
For any $z_i \in \mathbb R^d$, $i=1,\ldots,N$ its holds 
\begin{equation} \label{noch}
\argmin_{\|a\|=1 } \sum_{i=1}^N \varphi\big(\| P_a z_i \|^2\big)  \; \in \; \mathrm{span}\{z_i:i=1,\ldots,N\}
\end{equation}
by the following reasons:
Every $a \in \mathbb{R}^d$ with $\|a\|=1$ can be written as 
		\[
			a = \frac{\tilde a+ \tilde a_\perp}{\|\tilde a+\tilde a_\perp\|}, 
		\]
where $\tilde a \in \mathrm{span}\{z_i:i=1,\ldots,N\}$ and $\tilde a_\perp$ is in the orthogonal complement of $\mathrm{span}\{z_i:i=1,\ldots,N\}$.
Then we have for every $z \in \mathrm{span}\{z_i:i=1,\ldots,N\}$,
		\[
			\|P_{a} z\|^2 
			= \|z\|^2 - \langle a, y\rangle^2 
			= \|z\|^2 -\frac{\langle \tilde a, z\rangle^2 }{\|\tilde a\|^2 + \|\tilde a_\perp\|^2} 
			\ge 
			\|z\|^2 - \frac{\langle \tilde a, z\rangle^2 }{\|\tilde a\|^2 }
		\]
		with equality if $\|\tilde a_\perp\| = 0$.
Since $\varphi$ is strictly increasing, any minimizer $\hat a$ must be in $\mathrm{span}\{z_i:i=1,\ldots,N\}$.
\hfill $\Box$
\end{remark}

We have to deal with the function
\begin{equation} \label{E}
E(a) := \sum_{i=1}^N E_i(a) =  \sum_{i=1}^N \| P_a y_i \| .
\end{equation}
This function is continuously differentiable on $\mathbb R^d$ except for $a \in \mathbb R^d$
satisfying $\| P_a y_k \| = \|(I_d - a a^\tT) y_k\| = 0$ for some $k \in \{1,\ldots,N\}$.
This is equivalent to
$y_k = a \langle a, y_k\rangle$
and for
$a \in  \mathbb S^{d-1}$ to $a \in \{ \pm \frac{y_k}{\|y_k\|} \}$. Let
$$
\mathcal{A} := \{\pm \frac{y_i}{\|y_i\|}: i=1,\ldots,N\}
$$
denote this set of directions on $\mathbb S^{d-1}$, where $E$ is not differentiable.
Similarly as in Weiszfeld's algorithm, we call it \emph{anchor set}.

The following theorem collects important properties of $E$.
The third property relies on the relation between one-sided derivatives
and local minima of Lipschitz continuous functions 
on embedded manifolds in $\mathbb R^d$. The definition of one-sided derivatives
and a theorem local minima can be characterized by is given in Appendix \ref{sec:app}. In our case the embedded manifold is the sphere $\mathbb{S}^{d-1}\coloneqq \{ a\in\mathbb{R}^d:\ \|a\|=1 \}$.

\begin{theorem} \label{propE}
Let $E$ defined by \eqref{E}.
\begin{enumerate}
\item 
The function $E$ is locally Lipschitz continuous on $\mathbb R^{d}$.
\item
For $a \in \mathbb S^{d-1} \backslash	\mathcal{A}$, it holds
\begin{equation} \label{def_C}
\nabla E(a) = - P_a \, C_a \, a,  
\qquad 
C_{a} := \sum_{i=1}^N \frac{1}{\|P_a y_i\|} y_i y_i^\tT,
\end{equation}
and $\nabla E(a)$ is in the tangent space $T_a \mathbb S^{d-1}$ of $\mathbb S^{d-1}$ at $a$. 
\item 
A direction $a \in \mathcal{A}$ is a local minimizer of $E$ if
\begin{equation} \label{condition}
 \|G_{a,\mathcal{K}}\|  <  \sum_{k \in \mathcal{K}} \|y_k\|,  
\end{equation}
where
$\mathcal{K} := \{ k \in \{1,\ldots,N\}: \|P_a y_k\| = 0 \}$ and
$$
G_{a,\mathcal{K}} := P_a C_{a,\mathcal{K}} a , 
\qquad 
C_{a,\mathcal{K}} := \sum_{i \not \in \mathcal{K} } \frac{1}{\|P_a y_i\|} y_i y_i^\tT.
$$
\end{enumerate}
\end{theorem}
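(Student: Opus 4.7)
}

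\textbf{Part 1 (local Lipschitz continuity).} I would observe that for each $i$, the map $a \mapsto P_a y_i = y_i - (a^\tT y_i) a$ is a polynomial map $\mathbb R^d \to \mathbb R^d$, hence smooth and locally Lipschitz on bounded sets. Since the Euclidean norm is $1$-Lipschitz, the composition $E_i(a)=\|P_a y_i\|$ is locally Lipschitz. Summing over $i=1,\dots,N$ preserves this property, so $E$ is locally Lipschitz on $\mathbb R^d$.

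\textbf{Part 2 (gradient on the open part of the sphere).} At any $a \in \mathbb S^{d-1}\setminus \mathcal{A}$, every $E_i$ is smooth in a neighborhood since $\|P_a y_i\|>0$. I would differentiate $E_i(a)^2=\|y_i\|^2-2(a^\tT y_i)^2+\|a\|^2 (a^\tT y_i)^2$, evaluate the chain-rule gradient at $\|a\|=1$ to obtain $\nabla E_i(a)^2 = -2(a^\tT y_i)\,P_a y_i$, and then use $\nabla E_i(a) = \nabla E_i(a)^2/(2E_i(a))$ to get $\nabla E_i(a) = -\frac{(a^\tT y_i)}{\|P_a y_i\|}P_a y_i$. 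Summing and pulling $P_a$ out gives $\nabla E(a) = -P_a C_a a$. Finally, since $P_a a=0$ on the sphere, $P_a^2=P_a$ on the sphere, so $\nabla E(a) = P_a \nabla E(a)$ lies in $T_a\mathbb S^{d-1}=\{h\in\mathbb R^d:a^\tT h=0\}$.

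\textbf{Part 3 (anchor local minimizers).} This is the main step, and the key difficulty is that $E$ has a kink at $a\in \mathcal{A}$ precisely through the terms $E_k$ with $k\in\mathcal{K}$. I would use the manifold characterization from Appendix \ref{sec:app}: it suffices to show that every one-sided directional derivative $E'(a;h)$ along curves in $\mathbb S^{d-1}$ is strictly positive for all tangent directions $0\neq h\in T_a\mathbb S^{d-1}$.

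For the computation, fix a smooth curve $\gamma:[0,\varepsilon)\to\mathbb S^{d-1}$ with $\gamma(0)=a$ and $\dot\gamma(0)=h$. For $k\notin\mathcal{K}$, $E_k\circ\gamma$ is smooth at $0$ and contributes $\langle \nabla E_k(a),h\rangle$; summing these yields $-\langle G_{a,\mathcal{K}},h\rangle$ by the identical calculation used in Part 2 but restricted to the sum over $k\notin\mathcal{K}$. For $k\in\mathcal{K}$ we have $y_k=\pm\|y_k\|a$, hence $a^\tT h=0$ gives $h^\tT y_k=0$; a first-order Taylor expansion yields
\begin{equation}
P_{\gamma(t)}y_k = \mp\|y_k\|\,t\,h + O(t^2),\qquad \|P_{\gamma(t)}y_k\|=\|y_k\|\,\|h\|\,t+O(t^2),
\end{equation}
so $(E_k\circ\gamma)'(0^+) = \|y_k\|\,\|h\|$. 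Adding both contributions gives
\begin{equation}
E'(a;h) \;=\; -\langle G_{a,\mathcal{K}},h\rangle + \Big(\sum_{k\in\mathcal{K}} \|y_k\|\Big)\,\|h\|.
\end{equation}
Since $G_{a,\mathcal{K}}=P_a C_{a,\mathcal{K}}a\in T_a\mathbb S^{d-1}$, the infimum of $-\langle G_{a,\mathcal{K}},h\rangle$ over unit tangent vectors $h$ is $-\|G_{a,\mathcal{K}}\|$. Hence the hypothesis \eqref{condition} yields $E'(a;h)>0$ for every nonzero tangent direction, and the appendix criterion produces the local minimality of $a$. The delicate step to watch is the kink computation, specifically confirming that the linear term in $t$ in $P_{\gamma(t)}y_k$ depends on $h$ and not on the particular choice of curve $\gamma$, so that the one-sided derivative is well-defined and matches the tangent-vector-only description required by the appendix.
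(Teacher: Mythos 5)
Your proposal is correct and follows essentially the same route as the paper: a Lipschitz-composition argument for part 1, direct differentiation for part 2, and in part 3 the computation of the one-sided directional derivative $DE(a;h)=\sum_{k\in\mathcal K}\|y_k\|\,\|h\|-\langle G_{a,\mathcal K},h\rangle$ combined with the sufficient-minimality criterion of Theorem \ref{prop:loc_min}. The only cosmetic difference is that you differentiate along curves in $\mathbb S^{d-1}$ while the paper (consistently with the Appendix's definition) uses straight-line increments $a+\alpha h$ with $h\in T_a\mathbb S^{d-1}$; as you note, local Lipschitz continuity makes these agree up to $O(t^2)$, so the argument is sound.
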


\begin{proof}
1. It suffices to show the property for the summands $E_i$. For an arbitrary fixed $a \in \mathbb R^{d}$,
let $\|a- a_i\| \le \varepsilon$, $i=1,2$.
Then we obtain
\begin{align*}
|E_i(a_1) - E_i(a_2)| &= | \, \|P_{a_1} y_i\| - \|P_{a_2} y_i\| \, |
\le \|P_{a_1} y_i - P_{a_2} y_i\| \\
&\le \|a_1 a_1^\tT - a_2 a_2^\tT\|_F \|y_i\| \\
&= \frac12 \|(a_1 - a_2)(a_1^\tT + a_2^\tT)  + (a_1+ a_2)(a_1^\tT - a_2^\tT) \|_F \| y_i\|\\
&\le 2 (\|a\| + \varepsilon) \|y_i\| \|(a_1 - a_2)\|_F. 
\end{align*}
2. By straightforward computation we obtain at points  $a \in \mathbb R^{d}$, where $E$ is differentiable,
 \begin{align*}
\nabla E_i(a) &= - \frac{1}{\|P_a y_i\|} \left( P_a y_i y_i^\tT a + y_i y_i^\tT P_a a \right).
\end{align*}
The second summand vanishes for $a \in \mathbb S^{d-1}$ which yields \eqref{def_C}. 
Since $P_a$ projects to the space orthogonal to $a$
the gradient $\nabla E(a)$ lies in $T_a \mathbb S^{d-1}$.
\\[1ex]
3. For $a \in \mathcal{A}$ we have $a \in \pm \frac{y_k}{\| y_k\|}$ for $k \in \mathcal{K}$.
Then the one-sided directional derivative of $E_k$ at $a \in \mathcal{A}$ 
in direction $h \in T_a S^{d-1}$ reads as
\begin{align*}
D E_k (a;h) 
&= \lim_{\alpha\downarrow 0} \frac{E_k(a+\alpha a) - E_k(a)}{\alpha}
= \lim_{\alpha\downarrow 0} \frac{\|(I-(a+\alpha h)(a+\alpha h)^\tT) y_k\|}{\alpha} \\
&= \lim_{\alpha\downarrow 0} \frac{\|\alpha a h^\tT y_k + \alpha h a^\tT y_k + \alpha^2 h h^\tT y_k\|}{\alpha}\\
&= \| (a h^\tT  + h a^\tT) y_k \| = \| h a^\tT y_k \| = \|h\| \|y_k\|.
\end{align*}
For $i \not \in \mathcal{K}$ we have by part 2 of the proof
\begin{align*}
D E_i (a;h) 
&= 
\langle \nabla E_i(a),h \rangle 
= 
- \left\langle  P_a  \frac{1}{\|P_a y_i\|} y_i y_i^\tT a , h \right \rangle
\end{align*}
so that in summary
\begin{align*}
D E (a;h) = \sum_{k \in \mathcal{K}} \| y_k\| \|  h  \| - \langle P_a C_{a,\mathcal{K}} a,h \rangle.
\end{align*}
Since $E$ is locally Lipschitz continuous on $\mathbb R^d$, we conclude by Theorem \ref{prop:loc_min} 
that $a \in \mathcal{A}$ is a local minimizer if
\begin{equation} \label{min_1}
\langle P_a C_{a,\mathcal{K}} a,h \rangle
< 
\sum_{k \in \mathcal{K}}  \| y_k\| \|  h  \| 
\end{equation}
for all $h \in T_a \mathbb S^{d-1}$. Since $P_a C_{a,\mathcal{K}} a \in T_a \mathbb S^{d-1}$ this equivalent to
$$
 \| P_a C_{a,\mathcal{K}} a \| < \sum_{k \in \mathcal{K}}  \| y_k\|.
$$ 
\end{proof}

To establish a Weiszfeld-like algorithm, we consider again two cases:

If $a \not \in \mathcal{A}$, then
$
0 = \nabla E(a) = -P_a C_a a
$
can be rewritten as the fixed point equation
\begin{align}
a &= (a^\tT C_a a)^{-1} C_a a \\
&= a + (\underbrace{a^\tT C_a a}_{s_a} )^{-1} P_a C_a a.
\end{align}
This gives rise to the gradient descent step on $\mathbb S^{d-1}$:
\begin{align}
a^{(r+1)} = \frac{C_{a^{(r)}} a^{(r)}}{\|C_{a^{(r)}}a^{(r)}\|},
\end{align}
where the factor $s_{a^{(r)}}$ cancels out when projecting on $\mathbb S^{d-1}$.
This also appears in the algorithm proposed by Ding et al. \cite{ding2006r}
from another point of view.

If $a \in \mathcal{A}$ and $\|G_{a,\mathcal{K}}\|  >  \sum_{k \in \mathcal{K}} \|y_k\|$, then 
we suggest to use
$$
G_{a,\mathcal{K}} \left(1- \frac{ \sum_{k\in \mathcal{K}} \|y_k\|}{\| G_{a,\mathcal{K}} \|} \right)
$$
instead of the gradient as descent direction which results in the iteration
$$
a^{(r+\frac12)} := a^{(r)} + s_{a^{(r)},\mathcal{K}}^{-1} \left(1- \frac{\sum_{k\in \mathcal{K}} \|y_k\|}{\|G_{a^{(r),\mathcal{K}} }\|} \right)
$$ 
with
$$
s_{a,\mathcal{K}} := a^\tT C_{a,\mathcal{K}} a =  \sum_{i \not \in \mathcal{K} } \frac{\langle a, y_i \rangle^2}{\|P_a y_i\|}
$$
and subsequent orthogonal projection onto $\mathbb S^{d-1}$.
In summary, we obtain Algorithm \ref{alg:onedir}.
%
\begin{algorithm}[htb]
	\caption{Algorithm for Minimizing $E$ over $\mathbb S^{d-1}$}\label{alg:onedir}
	\begin{algorithmic}
	\State \textbf{Input:}
		$y_i \in \mathbb R^d$,  $i=1,\ldots,N$ pairwise distinct with positive definite covariance matrix
	\State \hspace{1.2cm} $a^{(0)} \in \mathbb R^d$
		
	\State r = 0
		\Repeat 
		\If{$\|P_{a^{(r)}} y_k\| \not = 0$ for all $k\in \{1,\ldots,N\}$}
		\vspace{0.1cm}
		
		\State
		$
		C_{a^{(r)} } := \sum_{i=1}^N \frac{1}{\| P_{a^{(r)}} y_i\|} y_i y_i^\tT
		$
		\vspace{0.1cm}
		\State
		$
		a^{(r+1)} := \frac{C_{a^{(r)}} a^{(r)}}{\|C_{a^{(r)}} a^{(r)}\|}	
		$
		\\
		\ElsIf{$\|P_{a^{(r)}} y_k\|  = 0$ for $k\in {\mathcal K} \subset \{ 1,\ldots,N \}$}
		\vspace{0.1cm}
		
		\State
		$
		C_{a^{(r)},\mathcal{K}} :=  \sum_{i \not \in \mathcal{K} } \frac{1}{\|P_{a^{(r)}} y_i\|} y_i y_i^\tT
		$
		\vspace{0.1cm}
		\State
		$G_{a^{(r)},\mathcal{K}} := P_{a^{(r)}} C_{a^{(r)},\mathcal{K}} a^{(r)} $
		\\
		
		\If{$\|G_{a^{(r)},\mathcal{K}}\| \le \sum\limits_{k \in \mathcal{K}} \|y_k\|$}
		\State {termination}
		
		\Else
		\State
		$s_{a^{(r)},\mathcal{K}} :=  \sum\limits_{i \not \in \mathcal{K} } \frac{\langle a^{(r)}, y_i \rangle^2}{\|P_{a^{(r)}} y_i\|}$
		\State	
		$a^{(r+\frac12)} := a^{(r)} + s_{a^{(r)},\mathcal{K}}^{-1} \left(1- \frac{\sum\limits_{k\in \mathcal{K}} \|y_k\|}{\|G_{a^{(r)},\mathcal{K} }\|} \right) 
		G_{a^{(r)},\mathcal{K}}$
		\State
		$a^{(r+1)} = \frac{ a^{(r+ \frac12)} }{\| a^{(r+ \frac12)}\|}$
		\EndIf
	\EndIf	
	\State $r\rightarrow r+1$		
	\Until a stopping criterion is reached	
	\end{algorithmic}
\end{algorithm}

\section{Convergence Analysis} \label{sec:conv}
In this section, we show that that sequence generated by the Algorithm \ref{alg:onedir}
converges to a critical point of $E$,
where we say that $a \in \mathbb S^{d-1}$ 
is a \emph{critical point} of $E$ on $\mathbb S^{d-1}$
if one of the following conditions is fulfilled:
\begin{itemize}
 \item[i)]  
$a \not \in \mathcal{A}$ and $-\nabla E(a) = P_a C_a a = 0$ .
\item[ii)]
$a \not \in \mathcal{A}$ and
$\| G_{a,\mathcal{K}} \| \le  \sum_{k\in \mathcal{K}} \|y_k\|$.
\end{itemize}
We need four lemmata and apply a theorem of Attouch, Bolte and Svaiter \cite{ABSF13}
on the convergence of functions having the Kurdyka–Łojasiewicz property.

\begin{lemma}\label{alg_stop}
For the sequence $\{a^{(r)}\}_r$  produced by Algorithm \ref{alg:onedir}
we have $a^{(r+1)} = a^{(r)}$ if and only if $a^{(r)}$ is a critical point of $E$ on $\mathbb S^{d-1}$.
If the iteration stops after finitely many steps, then it has reached a critical point.
\end{lemma}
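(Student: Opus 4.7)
The plan is to treat the two branches of Algorithm \ref{alg:onedir} separately, corresponding to $a^{(r)}\notin\mathcal A$ and $a^{(r)}\in\mathcal A$, and in each case translate the fixed-point condition $a^{(r+1)}=a^{(r)}$ (or the explicit termination condition) into one of the criticality conditions i), ii). The final clause about finite-step termination will then be automatic.

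For the non-anchor branch, I would first observe that since the empirical covariance matrix is positive definite by the input assumption, the $y_i$ span $\mathbb R^d$, so $C_{a^{(r)}}=\sum_i\|P_{a^{(r)}}y_i\|^{-1}y_iy_i^\tT$ is symmetric positive definite. In particular $(a^{(r)})^\tT C_{a^{(r)}} a^{(r)}>0$ and $C_{a^{(r)}}a^{(r)}\ne 0$, so the update $a^{(r+1)}=C_{a^{(r)}}a^{(r)}/\|C_{a^{(r)}}a^{(r)}\|$ is well defined. Then $a^{(r+1)}=a^{(r)}$ iff $C_{a^{(r)}}a^{(r)}=\lambda a^{(r)}$ for some scalar $\lambda$, and positive definiteness forces $\lambda=(a^{(r)})^\tT C_{a^{(r)}}a^{(r)}>0$, which in particular rules out a sign flip during the normalization. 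This eigenvector equation is equivalent to $P_{a^{(r)}}C_{a^{(r)}}a^{(r)}=0$, i.e.\ by Theorem~\ref{propE}(2) to $\nabla E(a^{(r)})=0$, which is exactly criticality condition i).

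For the anchor branch, if $\|G_{a^{(r)},\mathcal K}\|\le\sum_{k\in\mathcal K}\|y_k\|$ the algorithm terminates, which is precisely criticality condition ii). Otherwise, set $c:=s_{a^{(r)},\mathcal K}^{-1}\bigl(1-\tfrac{\sum_{k\in\mathcal K}\|y_k\|}{\|G_{a^{(r)},\mathcal K}\|}\bigr)$. Here $s_{a^{(r)},\mathcal K}>0$ because the nonzero vectors $y_i$, $i\notin\mathcal K$, together with $y_k$, $k\in\mathcal K$ (which is parallel to $a^{(r)}$), still span $\mathbb R^d$, and the parenthesized factor is strictly positive by the case assumption; hence $c>0$. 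By construction $G_{a^{(r)},\mathcal K}=P_{a^{(r)}}C_{a^{(r)},\mathcal K}a^{(r)}\in T_{a^{(r)}}\mathbb S^{d-1}$ is orthogonal to $a^{(r)}$, and since $\|G_{a^{(r)},\mathcal K}\|>\sum_{k\in\mathcal K}\|y_k\|>0$ it is nonzero. Consequently $a^{(r+\frac12)}=a^{(r)}+c\,G_{a^{(r)},\mathcal K}$ has norm $\sqrt{1+c^2\|G_{a^{(r)},\mathcal K}\|^2}>1$ and is not a scalar multiple of $a^{(r)}$, so after normalization $a^{(r+1)}\ne\pm a^{(r)}$. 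Thus in the anchor branch one can never have $a^{(r+1)}=a^{(r)}$ without the algorithm having already terminated.

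Combining the two branches yields the stated equivalence, and since the only way the algorithm halts in finitely many steps is via the explicit termination test, which coincides with criticality condition ii), the second assertion follows. The only slightly delicate points are the positive-definiteness bookkeeping used to rule out a spurious negative eigenvalue in the non-anchor fixed-point equation and to guarantee $s_{a^{(r)},\mathcal K}>0$ and $\sum_{k\in\mathcal K}\|y_k\|>0$ in the anchor branch; beyond that, the proof reduces to the three short observations above.
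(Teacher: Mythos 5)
Your proposal is correct and follows essentially the same route as the paper: in the non-anchor branch the fixed-point condition is identified with the eigenvector equation $C_aa=(a^\tT C_a a)a$, i.e.\ $P_aC_aa=0$, and in the anchor branch the termination test is matched with condition ii); you are in fact somewhat more careful than the paper, which silently assumes that the anchor-case descent step strictly moves the iterate and that the normalization does not flip the sign. One small slip: your spanning argument does not by itself give $s_{a^{(r)},\mathcal K}>0$ (all $y_i$ with $i\notin\mathcal K$ could be orthogonal to $a^{(r)}$); instead note that in the branch you are in, $\|G_{a^{(r)},\mathcal K}\|>\sum_{k\in\mathcal K}\|y_k\|>0$ already forces $\langle a^{(r)},y_i\rangle\neq 0$ for some $i\notin\mathcal K$, which yields $s_{a^{(r)},\mathcal K}>0$.
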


\begin{proof}
1. Let $a^{(r+1)} =  a^{(r)} = a$. 
If $a$ is not in the anchor set, this implies 
$\frac{C_a a}{\|C_a a\|} = a$ and hence $P_a C_a a = \|C_a a\| P_a a = 0$.
If $a$ is in the anchor set, then relation in ii) must be fulfilled by the stopping condition.
\\
2. 
Let $a^{(r)} \in \mathbb S^{d-1}$ be a critical point of $E$. 
If $a^{(r)}$ is not in the anchor set, then by definition
$0 = P_{a^{(r)}} C_{a^{(r)}} a^{(r)} = C_{a^{(r)}} a^{(r)} - a^{(r)} \, (a^{(r)})^\tT C_{a^{(r)}} a^{(r)}$ so that
$$ 
a^{(r+1)} =  \frac{C_{a^{(r)}} a^{(r)}}{\Vert C_{a^{(r)}} a^{(r)} \Vert} 
=  
\frac{(a^{(r)})^\tT C_{ a^{(r)} } a^{(r)}) \, a^{(r)}}{\|\left( (a^{(r)})^\tT C_{a^{(r)}} a^{(r)}\right) \, a^{(r)}\|} = a^{(r)}.
$$
If $a^{(r)}$ is in the anchor set, then $\| G_{{a^{(r)}},\mathcal{K}} \| \le  \sum_{k\in \mathcal{K}} \|y_k\|$
and the iteration stops by definition, i.e. $a^{(r+1)} = a^{(r)}$.
\end{proof}

\begin{lemma}\label{abstieg}
Let $\{a^{(r)}\}_{r}$ be the sequence generated by Algorithm \ref{alg:onedir}. 
If $a^{(r+1)} \not = a^{(r)}$, then $E(a^{(r+1)}) < E(a^{(r)})$.
The sequence $\{ E(a^{(r)}) \}_r$ converges to some value $\hat E \ge 0$.
\end{lemma}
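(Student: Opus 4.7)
The plan is to establish strict monotone decrease $E(a^{(r+1)}) < E(a^{(r)})$ whenever $a^{(r+1)} \neq a^{(r)}$; since $E \ge 0$ on $\mathbb S^{d-1}$, the sequence $\{E(a^{(r)})\}_r$ is then monotone non-increasing and bounded below, hence converges to some $\hat E \ge 0$. The argument splits along the two branches of Algorithm \ref{alg:onedir}.

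For the \emph{non-anchor branch} $b := a^{(r)} \not\in \mathcal{A}$, I would construct a tangent majorant from the elementary inequality $\sqrt{t} \le (t + t_0)/(2\sqrt{t_0})$ applied with $t = \|P_a y_i\|^2 = \|y_i\|^2 - \langle a, y_i\rangle^2$ and $t_0 = \|P_b y_i\|^2 > 0$. Summing over $i$ yields
\[
E(a) \le M_b(a) := \frac{1}{2} \sum_{i=1}^N \frac{\|y_i\|^2}{\|P_b y_i\|} - \frac{1}{2} a^\tT C_b a + \frac{1}{2} E(b),
\]
with $M_b(b) = E(b)$. On $\mathbb S^{d-1}$, decreasing $M_b$ is equivalent to increasing $a^\tT C_b a$. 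Since $C_b$ is positive definite by the data assumption, $\langle x, y\rangle_{C_b} := x^\tT C_b y$ is a genuine inner product, and Cauchy--Schwarz in this inner product combined with $a^{(r+1)} = C_b b / \|C_b b\|$ gives $(a^\tT C_b a)(b^\tT C_b b) \ge (a^\tT C_b b)^2 = \|C_b b\|^2$. Standard Cauchy--Schwarz (using $\|b\|=1$) gives $\|C_b b\|^2 \ge (b^\tT C_b b)^2$. Chaining yields $a^\tT C_b a \ge b^\tT C_b b$, with equality in both bounds forcing $C_b b \parallel b$, equivalently $a^{(r+1)} = b$. Thus $a^{(r+1)} \neq a^{(r)}$ implies $E(a^{(r+1)}) \le M_b(a^{(r+1)}) < M_b(b) = E(b)$.

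For the \emph{anchor branch}, $b \in \mathcal{A}$ with $\|G_{b,\mathcal{K}}\| > \sigma := \sum_{k\in\mathcal{K}} \|y_k\|$. The tangent majorant above is unavailable for $i \in \mathcal{K}$, so I would use the \emph{exact} identity $\|P_a y_k\| = \|y_k\|\sqrt{1 - \langle a, b\rangle^2}$ (valid because $y_k = \pm\|y_k\| b$) together with the tangent majorant for $i \notin \mathcal{K}$, yielding a hybrid majorant $M_b$ with $M_b(b) = E(b)$. Writing $a^{(r+1)} = (b + \tau G)/\sqrt{1 + \tau^2 \|G\|^2}$ with $G = G_{b,\mathcal{K}}$ and $\tau = s_{b,\mathcal{K}}^{-1}(1 - \sigma/\|G\|) > 0$, the orthogonality $\langle b, G\rangle = 0$ (from part 2 of Theorem \ref{propE}) collapses $M_b(a^{(r+1)})$ to a one-variable function of $\tau$. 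Using the identity $b^\tT C_{b,\mathcal{K}} G = \|G\|^2$---which is exactly what makes the one-sided derivative $DE(b; G) = \sigma\|G\| - \|G\|^2$ from part 3 of Theorem \ref{propE} strictly negative---a direct computation verifies that the algorithm's specific $\tau$ lies in the strict descent regime of the hybrid majorant, so $E(a^{(r+1)}) \le M_b(a^{(r+1)}) < E(b)$.

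I expect the main obstacle to be the anchor branch: the hybrid majorant has a cusp at $b$ coming from the $\sqrt{1 - \langle a, b\rangle^2}$ factor, so strict descent is not automatic from a first-order argument and one must check that the Weiszfeld-style step $\tau$, with its careful correction factor $1 - \sigma/\|G\|$, is precisely tuned to cross the nonsmooth threshold. The non-anchor branch, by contrast, reduces cleanly to an MM argument via the double Cauchy--Schwarz trick.
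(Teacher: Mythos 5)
Your proposal is correct and follows essentially the same route as the paper: your tangent majorant $\sqrt{t}\le (t+t_0)/(2\sqrt{t_0})$ is exactly the paper's inequality $u-v\le (u^2-v^2)/(2v)$, and your anchor branch uses the same exact evaluation of the $k\in\mathcal{K}$ terms together with the same identities $b\perp G$ and $b^\tT C_{b,\mathcal{K}}G=\|G\|^2$. The only differences are in packaging — you close the non-anchor case by Rayleigh-quotient monotonicity (double Cauchy--Schwarz) where the paper instead observes that the cross term $a^\tT P_{\tilde a}C_a a$ vanishes, and the ``direct computation'' you defer in the anchor branch still needs the paper's Young's-inequality step $s+G^\tT C G/\|G\|^2\ge 2\|G\|$ (Cauchy--Schwarz in the $C$-inner product) to absorb the positive contribution $\sigma\tau\|G\|/\sqrt{1+\tau^2\|G\|^2}$ of the anchor terms.
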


\begin{proof}
If the sequence of function values decreases, its convergence follows immediately from the fact that E is bounded from below by zero.
To show the decrease property, we set 
$a := a^{(r)}$, $\bar a := a^{(r+\frac12)}$ and $\tilde a = a^{(r+1)}$ 
and abbreviate
$$
G:= G_{ a^{(r)},\mathcal{K} }, \quad C := C_{a^{(r)},\mathcal{K}} \quad \mbox{and} \quad s:= s_{a^{(r)},\mathcal{K}},
$$
where $\mathcal{K}$ is the empty set if $a^{(r)}$ is not an anchor direction.

\textbf{Case 1}: Let $a\notin \mathcal{A}$ be a non-anchor direction.
For $u\geq 0, v>0$ it holds $u-v\leq \frac{u^2-v^2}{2v}$ so that
	\begin{align}
	E(\tilde a) - E(a) &=
	\sum_{i=1}^N\left(  \| P_{\tilde a} y_i\| - \|P_{a}y_i\| \right)\\
	&\leq 
	\sum_{i=1}^N \frac{\|P_{\tilde a} y_i\|^2 - \|P_{a} y_i\|^2}{2\|P_{a} y_i\|}
	= 
	\sum_{i=1}^N  \frac{\|\tilde a \tilde a^\tT y_i - y_i\|^2 - \|a a^\tT y_i - y_i\|^2}{2\|P_{a}y_i\|} .
	\end{align}
Using $\|u-v\|^2 - \|w-v\|^2 = 2\langle u-w,u-v \rangle - \|u-w\|^2$ we get
	\begin{align}
	E(\tilde a) - E(a)
	&\leq 
	\sum_{i=1}^N  \frac{1}{ \| P_{a} y_i\|} \langle \tilde a \tilde a^\tT y_i - a a^\tT y_i, \tilde a \tilde a^\tT y_i -y_i \rangle
		- \sum_{i=1}^N  \frac{\|\tilde a \tilde a^\tT y_i - a a ^\tT y_i\|^2}{2\|P_{a}y_i\|}\\
	&= 
	\sum_{i=1}^N  \frac{\langle a a ^\tT y_i, P_{\tilde a}y_i  \rangle}{\|P_{a}y_i\|} 
	-  \sum_{i=1}^N  \frac{\|\tilde a \tilde a^\tT y_i - a a ^\tT y_i\|^2}{2\|P_{a}y_i\|}
	\label{eq:wf_proof_mid}\\
	&= a^\tT P_{\tilde a} C a  -\sum_{i=1}^N  \frac{\|\tilde a \tilde a^\tT y_i - a a ^\tT y_i\|^2}{2\|P_{a}y_i\|}\\
	&= \|C a\| a^\tT P_{\tilde a} \tilde a
	-\sum_{i=1}^N  \frac{\|\tilde a \tilde a^\tT y_i - a a ^\tT y_i\|^2}{2\|P_{a}y_i\|}, \label{eq:wf_descent_badcase_1}
	\end{align}
	which finally implies
	\[
	E(\tilde a) - E(a)  \le -\sum_{i=1}^N  \frac{\|\tilde a \tilde a^\tT y_i - a a ^\tT y_i\|^2}{2\|P_{a}y_i\|}.
	\]
	Since $a,\tilde a \in \mathrm{span}(Y)$ the right-hand side is strictly negative
	except for $\tilde a = \pm a$ which was excluded.

\textbf{Case 2}: Let $a\in\mathcal{A}$, i.e., $\|P_a y_k\| = 0$ for $k \in \mathcal{K} \not = \emptyset$
and
$$
\| G \| > \sum_{k \in \mathcal{K}} \|y_k\| =: \alpha.
$$
From $P_a y_k = 0$, $k \in \mathcal{K}$, 
i.e., $y_k = a (a^\tT y_k)$ we obtain $\|y_k\| = |a^\tT y_k|$.
Since $\bar a = a + S^{-1} \left(1- \frac{\alpha}{\|G\|}\right) G$ and $a \perp G$ we have
\begin{align} 
\| \bar a\|^2  &= 1 + s^{-2}\left( 1-\frac{\alpha}{\|G\|} \right)^2 \|G\|^2 > 1\\
1- \frac{1}{\|\bar a\|^2} &= \frac{ (\|G\| - \alpha)^2}{s^2 \|\bar a\|^2} =: \mu^2.      \label{need2}
\end{align}
We have to estimate
\begin{equation}
E(\tilde a) - E(a)
= 
\sum_{i\not \in \mathcal{K}} \left( \| P_{\tilde a} y_i\| - \|P_{a} y_i\| \right) + \sum_{k  \in \mathcal{K}} \|P_{\tilde a} y_k\|  .
\end{equation}
First, we get for $k \in {\mathcal K}$,
\begin{equation}
	\| P_{\tilde a} y_k\|^2
	=
	y_k^\tT \left( I-\frac{\bar a \bar a^\tT }{\|\bar a\|^2} \right) y_k
	= 
	\|y_k\|^2 - \frac{y_k^\tT a a^\tT y_k}{\|\bar a\|^2} 
	= 
	\mu^2 \|y_k\|^2.
\end{equation}
so that
\begin{equation} \label{need1}
 \sum_{k  \in \mathcal{K}} \|P_{\tilde a} y_k\| = \mu \, \alpha.
\end{equation}
Replacing the sum over $\{1,\ldots,N\}$ in the first step of the proof 
by those over $\{1,\ldots,N\} \backslash  \mathcal{K}$ we get instead of \eqref{eq:wf_descent_badcase_1}
	\begin{align}
	\sum_{i\not \in \mathcal{K}}\left(  \| P_{\tilde a} y_i\| - \|P_{a}y_i\| \right)
	&\leq 
	 a^\tT \left( I-\frac{\bar a \bar a^\tT}{\|\bar a\|^2}  \right) C a 
	-\sum_{i\not \in \mathcal{K}} \frac{\|\tilde a \tilde a^\tT y_i - a a ^\tT y_i\|^2}{2\|P_{a}y_i\|}. \label{eq:wf_descent_badcase}
	\end{align}
	By definition of $\bar a$ and \eqref{need2} we can rewrite
	\begin{align}
	a^\tT \left( I- \frac{\bar a \bar a^\tT}{\|\bar a\|^2} \right) C a
	&=  a^\tT C a 
	- \frac{1}{\|\bar a\|^2} \left( a^\tT C a + s^{-1}\left( 1-\frac{\alpha}{\|G\|} \right) G^\tT C a \right)\\
	&= s - \frac{1}{\|\bar a\|^2} \left( s + s^{-1}\left( 1-\frac{\alpha}{\|G\|} \right)\|G\|^2 \right)\\
	&= \mu^2 s - \frac{1}{\|\bar a\|^2}s^{-1}\left( 1-\frac{\alpha}{\|G\|} \right)\|G\|^2\\
	&= \mu^2 s \left( 1 - \frac{\|G\|}{\|G\| - \alpha} \right)
	=  - \mu^2 s \, \frac{\alpha}{\|G\| -\alpha}\\
	&= -\mu \frac{\alpha}{\|\bar a\|}. \label{need3}
	\end{align}
	For the second sum in \eqref{eq:wf_descent_badcase} we get
	\begin{align}
	-\sum_{i\not \in {\mathcal K}} \frac{\|\tilde a \tilde a^\tT y_i - a a^\tT y_i\|^2}{2\|P_{a}y_i\|}
	= -\frac 12 \left( \tilde a^\tT C \tilde a - 2 \tilde a^\tT a a^\tT C\tilde a + a^\tT C a \right).
	\end{align}
	Application of $a^\tT CG = \|G\|^2$ and of the definition of $\tilde a$ leads to
	\begin{align*}
		\tilde a^\tT C \tilde a 
		& = \frac{1}{\|\bar a\|^2} \big( s + 2 s^{-1}(\|G\|-\alpha)\|G\| +
		s^{-2}(\|G\| - \alpha)^2\frac{1}{\|G\|^2}G^\tT C G \big)
	\end{align*}
	and
	\[
	\tilde a^\tT a a ^\tT C \tilde a = \frac{1}{\|\bar a \|^2}\left( s + s^{-1}(\|G\| - \alpha)\|G\| \right).
	\]
	Hence we obtain
	\begin{align}
			-\sum_{i \not \in \mathcal{K}}^N \frac{\|\tilde a \tilde a^\tT y_i - a  a^\tT y_i\|^2}{2\|P_{a}y_i\|}
			&=
			-\frac 12 \frac{1}{\|\bar a\|^2} \left( (\|\bar a\|^2 -1) s + s^{-2} (\|G\| - \alpha)^2\frac{1}{\|G\|^2} G^\tT C G \right)\\
			&= -\frac 12 \mu^2 \left(s + \frac{1}{\|G\|^2} G^\tT C G \right). 			
	\end{align}
	Since $C$ symmetric positive definite we conclude by Young's inequality
	\begin{align*}
		s + \frac{1}{\|G\|^2}G^\tT C G
		= \|C^{\frac12}a\|^2 + \|\frac{1}{\|G\|} C^{\frac 12}G\|^2
		\geq  2\frac{1}{\|G\|} a^\tT C^{\frac12}C^{\frac12}G
		=2\|G\|,
	\end{align*}
	so that
	$$
	-\sum_{i \not \in \mathcal{K}} \frac{\|\tilde a \tilde a^\tT y_i - a  a^\tT y_i\|^2}{2\|P_{a}y_i\|}
	\le 
	- \mu^2 \|G\|.
	$$
	Combining this equation with \eqref{need1}, \eqref{eq:wf_descent_badcase} and  \eqref{need3},
	and using that $\| \bar a\| > 1$, we obtain
	\begin{align*}
		E(\tilde a) - E(a)
		&\leq 
		-\mu \frac{\alpha}{\|\bar a\|} - \mu^2\|G\|  + \mu \alpha = \mu \left( \alpha (1- \frac{1}{\|\bar a\|}) - \mu \|G\| \right) \\
		& < \mu^2( \alpha - \|G\|) < 0.
	\end{align*}
\end{proof}	

\begin{lemma}\label{lem:conv_1}
Let $\{a^{(r)}\}_{r}$ be an infinite sequence generated by Algorithm \ref{alg:onedir}.
Then we have 
\begin{equation} \label{small}
\lim_{r\rightarrow \infty} \| a^{(r+1)} - a^{(r)}\| = 0.
\end{equation}
The set of accumulation points is compact and connected.
\end{lemma}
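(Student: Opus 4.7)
My plan is to first establish a uniform sufficient-decrease estimate of the form $E(a^{(r)}) - E(a^{(r+1)}) \ge c\,\|a^{(r+1)} - a^{(r)}\|^2$ along the non-anchor iterations, then argue that anchor iterations occur only finitely often, from which \eqref{small} follows via Lemma \ref{abstieg} and a standard square-summability argument. The main obstacle is the non-anchor estimate, since the denominators $\|P_{a^{(r)}} y_i\|$ can degenerate to zero; however, this only enlarges the lower bound on the decrease and therefore works in our favor.

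For the non-anchor step I would pick up the chain of inequalities already derived in the proof of Lemma \ref{abstieg}, namely
$$E(a^{(r)}) - E(a^{(r+1)}) \ge \sum_{i=1}^N \frac{\|a^{(r+1)}(a^{(r+1)})^\tT y_i - a^{(r)}(a^{(r)})^\tT y_i\|^2}{2\|P_{a^{(r)}} y_i\|}.$$
Using $\|P_{a^{(r)}} y_i\| \le \|y_i\| \le \max_j\|y_j\|$, the right-hand side is bounded below by $(2\max_j\|y_j\|)^{-1}\,\|(a^{(r+1)}(a^{(r+1)})^\tT - a^{(r)}(a^{(r)})^\tT)\,Y\|_F^2$ with $Y = (y_1\;\ldots\;y_N)$. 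Since $YY^\tT$ is positive definite by the input assumption of Algorithm \ref{alg:onedir}, this is in turn bounded below by $\lambda_{\min}(YY^\tT)/(2\max_j\|y_j\|)$ times $\|a^{(r+1)}(a^{(r+1)})^\tT - a^{(r)}(a^{(r)})^\tT\|_F^2$. The identity $\|uu^\tT - vv^\tT\|_F^2 = \|u-v\|^2(1+\langle u,v\rangle)$ for unit vectors, combined with the observation that $\langle a^{(r+1)}, a^{(r)}\rangle > 0$ (which follows from $a^{(r+1)} \propto C_{a^{(r)}} a^{(r)}$ and the positive definiteness of $C_{a^{(r)}}$), then yields $\|a^{(r+1)}(a^{(r+1)})^\tT - a^{(r)}(a^{(r)})^\tT\|_F^2 \ge \|a^{(r+1)} - a^{(r)}\|^2$, so the desired sufficient decrease holds with a constant $c$ independent of $r$.

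Anchor iterations are handled by a counting argument: the anchor set $\mathcal{A}$ has at most $2N$ elements, the algorithm is deterministic given the current iterate, and at each non-terminating anchor direction Lemma \ref{abstieg} guarantees a strictly positive decrease that depends only on that direction. Letting $\delta > 0$ denote the minimum of these finitely many decreases, the finiteness of the total decrease $E(a^{(0)}) - \hat E$ forces the sequence to visit $\mathcal{A}$ at most $\lfloor (E(a^{(0)}) - \hat E)/\delta \rfloor$ times. For all sufficiently large $r$ only non-anchor steps occur, and telescoping the sufficient-decrease inequality gives $\sum_r \|a^{(r+1)} - a^{(r)}\|^2 < \infty$ and in particular \eqref{small}.

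Finally, the set of accumulation points is a closed subset of the compact sphere $\mathbb{S}^{d-1}$ and is therefore compact. For connectedness I would invoke the standard argument that any bounded sequence satisfying \eqref{small} has connected accumulation set: if it split into two non-empty compact pieces $A_1, A_2$ at distance $3\varepsilon > 0$, choose $r_0$ so that $\|a^{(r+1)} - a^{(r)}\| < \varepsilon$ for $r \ge r_0$; since both $\varepsilon$-neighborhoods are entered infinitely often, there must be intermediate iterates that lie outside both of them, producing a further accumulation point outside $A_1 \cup A_2$, a contradiction.
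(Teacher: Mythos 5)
Your proof is correct, but it reaches \eqref{small} by a genuinely different route than the paper. The paper does not use a sufficient-decrease inequality in this lemma: it bounds the step length by $\|P_{a^{(r)}}C_{a^{(r)}}a^{(r)}\|/s_{a^{(r)}}$ via the non-expansiveness of the projection onto the sphere for points outside the unit ball, and then shows by a subsequence and case analysis that every accumulation point of the step lengths is zero --- at a non-anchor accumulation point $\hat a$ the continuity of $T$ together with $E(T(\hat a))=E(\hat a)=\hat E$ forces $T(\hat a)=\hat a$ and hence a vanishing gradient with $s_{a^{(r_j)}}$ bounded below, while at an anchor accumulation point $s_{a^{(r_j)}}\to\infty$ with bounded numerator. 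You instead extract a uniform sufficient-decrease estimate $E(a^{(r)})-E(a^{(r+1)})\ge c\,\|a^{(r+1)}-a^{(r)}\|^2$ from the inequality already derived in Lemma \ref{abstieg}, using $\|P_{a^{(r)}}y_i\|\le\max_j\|y_j\|$, the bound $\|MY\|_F^2\ge\lambda_{\min}(YY^\tT)\|M\|_F^2$ with $\lambda_{\min}(YY^\tT)>0$ from the input assumption, the identity $\|uu^\tT-vv^\tT\|_F^2=\|u-v\|^2(1+\langle u,v\rangle)$ for unit vectors, and $\langle a^{(r+1)},a^{(r)}\rangle>0$; square-summability of the steps then yields \eqref{small}. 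All of these steps check out. Both arguments dispose of the anchor set by noting it is visited only finitely often; your counting argument via a uniform minimal decrease $\delta$ is a valid way to make precise what the paper asserts in one line (strict monotonicity of $E$ even shows each of the at most $2N$ anchor directions is visited at most once), and both conclude compactness and connectedness by the standard Ostrowski argument. Your route buys something beyond this lemma: the paper later verifies condition C1 of Theorem \ref{KLconv} by a more delicate argument that itself needs \eqref{small} to secure sign consistency of the inner products $\langle a^{(r)},y_i\rangle$, whereas your explicit constant $c=\lambda_{\min}(YY^\tT)/(2\max_j\|y_j\|)$ establishes C1 uniformly and independently, which would streamline the main convergence proof. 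What the paper's approach buys is the explicit observation that $s_a\to\infty$ near anchor directions, which is reused in Lemma \ref{lem:conv_2}.
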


\begin{proof} 
Since the number of anchor directions is finite, we can choose $R$ large enough such that all iterates
$a^{(r)}$, $r \ge R$ are no anchor directions.
Since the projection $\Pi_{\Ss^{d-1}}$ onto the unit sphere is non-expansive 
for points not in the interior of the unit ball, we obtain
\begin{align*}
	\|a^{(r+1)} - a^{(r)}\| &= \left\| \Pi_{\Ss^{d-1}}\left(a^{(r)} + s_{a^{(r)}}^{-1}P_{a^{(r)}}C_{a^{(r)}}a^{(r)}\right) - \Pi_{\Ss^{d-1}}\left(a^{(r)}\right)\right\|\\
	&\leq 
	\frac{\|P_{a^{(r)}}C_{a^{(r)}} a^{(r)}\|}{s_{a^{(r)}}}.
\end{align*}
We show that all  accumulation points of $\{\beta_r\}_r$ with $\beta_r := \|a^{(r+1)} - a^{(r)}\|$ are zero.
Note that such accumulation points exist, since $\mathbb S^{d-1}$ is compact so that the sequence is bounded from below and above.
Let $\{\beta_{r_j}\}_j$ converge to $\hat \beta$ which is then also true for every subsequence.
Let $\{\beta_{r_{j_i}}\}_i$ by any subsequence for which $\{a^{(r_{j_i})} \}_i$ converges to an accumulation point $\hat a$.
For simplicity of notation, we skip the second index $i$.
We distinguish two cases:
\begin{enumerate}
	\item Let $\hat a \notin \mathcal{A}$ be a non-anchor direction.
	Then the update operator 
	$T(a) = \frac{C_a a}{\Vert C_a a\Vert}$ of the algorithm
	is continuous in $\hat a$ so that
	$\lim_{j\to\infty}a^{(r_j+1)} = \lim_{j\to\infty} T(a^{(r_j)}) = T(\hat a)$.
	By Lemma~\ref{abstieg} and continuity of $E$, we get
	\begin{align*}
		\hat E &= \lim_{j\to\infty} E(a^{(r_j)})  = E(\hat a)\\
		\hat E &= \lim_{j\to\infty} E(a^{(r_j+1)})  = E\left( T(\hat a) \right)
	\end{align*}
	so that $\hat a= T(\hat a)$. 
	This in turn yields 
	$P_{\hat a}C_{\hat a} \hat a= \|C_{\hat a} \hat a\|P_{\hat a} \hat a =0$. 
	Since the $\|P_{a^{(r_j)}}y_i\|$ are bounded from above,
	and since $a^{(r)}\in \mathrm{span}(Y)$ for all $r$ we conclude
	that $s_{a^{(r_j)}}$ is bounded from below.
	Taking the continuity of the involved operators in $\hat a$ into account, this implies
	\[\lim_{j \to\infty} \frac{\|P_{a^{(r_j)}}C_{a^{(r_j)}} a^{(r_j)}\|}{s_{a^{(r_j)}}} =0.\]
	
	\item Let $\hat a \in\mathcal{A}$ be an anchor direction. Then it holds
	\[
		\lim_{j\to\infty} s_{a^{(r_j)}} = \lim_{j\to\infty} \sum_{i=1}^N \frac{(y_i^\tT a^{(r_j)})^2}{\|P_{a^{(r_j)}}y_i\|} = \infty,
	\]
	while
	\[
		\|P_{a^{(r_j)}}C_{a^{(r_j)}} a^{(r_j)}\| = \left\|\sum_{i=1}^N y_i^\tT a^{(r_j)} \frac{P_{a^{(r_j)}}y_i}{\|P_{a^{(r_j)}}y_i\|} \right\| 
		\leq \sum_{i=1}^N |y_i^\tT a^{(r_j)}| \leq \sum_{i=1}^N \|y_i\|,
	\]
	so that
	\[
		\lim_{j\to\infty} \frac{\|P_{a^{(r_j)}}C_{a^{(r_j)}} a^{(r_j)}\|}{s_{a^{(r_j)}}} = 0.
	\]
\end{enumerate}
This proves \eqref{small}.
By Ostrowski's Theorem,
the set of accumulation points of the sequence of iterates is compact and connected.
\end{proof}

\begin{lemma}\label{lem:conv_2}
Let $\hat a$ be an anchor direction.
Let $T$ denote the iteration function of Algorithm \ref{alg:onedir}.
Then
\[
\lim_{a \rightarrow \hat a} \frac{\|T (a) - \hat a\|}{\|a-\hat a\|} 
= 
\frac{\|G_{\hat a,\mathcal{K} }\|}{\sum_{k \in \mathcal{K}} \|y_k\|}.
\]
\end{lemma}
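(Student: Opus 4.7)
The plan is to show that as $a \to \hat a$ along $\mathbb{S}^{d-1}\setminus\mathcal{A}$, the iteration map $T(a) = C_a a / \|C_a a\|$ has a well-defined directional behaviour dominated by the blow-up terms in $C_a$ coming from $k \in \mathcal K$, and to extract the claimed ratio by carefully expanding $\|T(a)-\hat a\|^2$. The main obstacle is handling the cancellation between the diverging ``radial'' component of $C_a a$ (pointing in direction $\hat a$) and the bounded ``transverse'' component, so that the ratio $\|T(a)-\hat a\|/\|a-\hat a\|$ has a finite nonzero limit rather than $0$ or $\infty$.

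First, I would split
\[
C_a a = \beta(a)\,\hat a + v(a),\qquad v(a):=C_{a,\mathcal K}\,a,
\]
where for $k\in\mathcal K$ the identity $y_k y_k^{\tT}a = \|y_k\|^2\langle a,\hat a\rangle\,\hat a$ (using $y_k = \pm\|y_k\|\hat a$) collects the singular terms into the scalar
\[
\beta(a) = \sum_{k\in\mathcal K}\frac{\|y_k\|^2\langle a,\hat a\rangle}{\|P_a y_k\|}.
\]
Using $\|a\|=\|\hat a\|=1$, one has $\|a-\hat a\|^2=2(1-\langle a,\hat a\rangle)$ and for $k\in\mathcal K$
\[
\|P_a y_k\|^2 = \|y_k\|^2(1-\langle a,\hat a\rangle^2) = \tfrac12\|y_k\|^2\,\|a-\hat a\|^2\,(1+\langle a,\hat a\rangle),
\]
so that $\|P_a y_k\| = \|y_k\|\,\|a-\hat a\|\sqrt{(1+\langle a,\hat a\rangle)/2}$. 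Substituting gives
\[
\beta(a) = \frac{\langle a,\hat a\rangle}{\|a-\hat a\|\sqrt{(1+\langle a,\hat a\rangle)/2}}\sum_{k\in\mathcal K}\|y_k\|,
\]
hence $\beta(a)\,\|a-\hat a\|\to \sum_{k\in\mathcal K}\|y_k\|$ as $a\to\hat a$. Meanwhile $v(a)$ is continuous at $\hat a$ with $v(a)\to C_{\hat a,\mathcal K}\hat a$, so $P_{\hat a}v(a) \to P_{\hat a}C_{\hat a,\mathcal K}\hat a = G_{\hat a,\mathcal K}$.

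Next I would compute
\[
\|T(a)-\hat a\|^2 = 2\,\frac{\|C_a a\|-\langle\hat a,C_a a\rangle}{\|C_a a\|}.
\]
With $\langle\hat a,C_a a\rangle = \beta(a) + \langle\hat a,v(a)\rangle$ and the key algebraic identity
\[
\|C_a a\|^2 - \bigl(\beta(a)+\langle\hat a,v(a)\rangle\bigr)^2 = \|v(a)\|^2-\langle\hat a,v(a)\rangle^2 = \|P_{\hat a}v(a)\|^2,
\]
one obtains
\[
\|C_a a\|-\bigl(\beta(a)+\langle\hat a,v(a)\rangle\bigr) = \frac{\|P_{\hat a}v(a)\|^2}{\|C_a a\|+\beta(a)+\langle\hat a,v(a)\rangle},
\]
and therefore
\[
\|T(a)-\hat a\|^2 = \frac{2\,\|P_{\hat a}v(a)\|^2}{\|C_a a\|\bigl(\|C_a a\|+\beta(a)+\langle\hat a,v(a)\rangle\bigr)}.
\]

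Finally I would pass to the limit. Since $\beta(a)\to\infty$ while $v(a)$ remains bounded, $\|C_a a\|/\beta(a)\to 1$, so the denominator above is asymptotic to $2\beta(a)^2$ and
\[
\|T(a)-\hat a\| \;\sim\; \frac{\|P_{\hat a}v(a)\|}{\beta(a)}.
\]
Dividing by $\|a-\hat a\|$ and inserting the asymptotics of $\beta(a)$ and $P_{\hat a}v(a)$ derived above yields
\[
\lim_{a\to\hat a}\frac{\|T(a)-\hat a\|}{\|a-\hat a\|}
= \frac{\|G_{\hat a,\mathcal K}\|}{\sum_{k\in\mathcal K}\|y_k\|},
\]
which is the claim. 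The whole argument amounts to a Laurent-type expansion of $C_a a$ in the scalar $\|a-\hat a\|$; the only delicate step is the cancellation captured by $\|C_a a\|^2-(\beta+\langle\hat a,v\rangle)^2 = \|P_{\hat a}v\|^2$, which automatically produces the projection $G_{\hat a,\mathcal K}$ in the numerator.
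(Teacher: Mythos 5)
Your argument is correct, and it reaches the stated limit by a genuinely different and in fact cleaner route than the paper. The paper writes $T(a)=T_a/\|T_a\|$ with $T_a=a+s_a^{-1}P_aC_aa$, splits $\|T(a)-\hat a\|^2/\|a-\hat a\|^2$ into two terms, and evaluates each limit separately via a Taylor expansion of $\sqrt{1+x}$ together with the asymptotics $s_a\|a-\hat a\|\to\|y_k\|$ and several term-by-term simplifications; it also restricts to $\mathcal K=\{k\}$ ``for simplicity of notation.'' You instead decompose $C_aa=\beta(a)\hat a+v(a)$ into its singular radial part and its regular part, and the difference-of-squares identity $\|C_aa\|^2-(\beta+\langle\hat a,v\rangle)^2=\|P_{\hat a}v\|^2$ gives an \emph{exact} closed form for $\|T(a)-\hat a\|^2$ with no remainder terms, after which the limit is a product of elementary factors ($\beta(a)\|a-\hat a\|\to\sum_{k\in\mathcal K}\|y_k\|$, $\|C_aa\|/\beta(a)\to1$, $P_{\hat a}v(a)\to G_{\hat a,\mathcal K}$). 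This buys you three things: no Taylor expansion or $\mathcal O(\cdot)$ bookkeeping, a treatment of general $\mathcal K$ with no extra effort, and a formula that remains valid in the degenerate case $G_{\hat a,\mathcal K}=0$ (for that case you should phrase the last step as taking the limit of each factor in the exact identity rather than using the asymptotic equivalence ``$\sim$'', which is slightly abusive when the numerator tends to zero --- a purely presentational point). The underlying asymptotic content is the same as the paper's (your $\beta(a)$ plays the role of the singular part of $s_a$), but the organization around the exact cancellation is a genuine improvement.
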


\begin{proof} 
For simplicity of notation, we assume that $\mathcal{K} = \{k\}$ and
without loss of generality $\hat a = y_k/\|y_k\|$. 
We set
 \[
 	T(a) = \frac{C_aa}{\|C_aa\|} = \frac{a + \frac{1}{s_a}P_aC_aa}{\|a + \frac{1}{s_a}P_aC_aa\|} \eqqcolon \frac{T_a}{\|T_a\|}.
 \]
 Similarly as in the proof of Lemma \ref{lem:conv_1}, Case 1, 
we have that $P_aC_aa$ is bounded from above and $\lim_{a\to \hat a} s_a = \infty$ so that $\lim_{a\to \hat a} \|T_a\|= 1$. 
We calculate
 \begin{align}
 	\frac{\|T(a) - \hat a\|^2}{\|a-\hat a\|^2}
 	&= \frac{\|a - \|T_a\|\hat a + \frac{1}{s_a}P_aC_aa\|^2}{\|a-\hat a\|^2\|T_a\|^2}\\
 	&= \frac{\|a - \|T_a\|\hat a\|^2}{\|T_a\|^2\|a-\hat a\|^2} + \frac{2\langle a-\|T_a\|\hat a,\frac{1}{s_a}P_aC_aa \rangle 
	+ \frac{1}{s_a^2}\|P_aC_aa\|^2}{\|T_a\|^2\|a-\hat a\|^2}.\label{eq:proof_terms}
 \end{align}
 The first term can be rearranged as
 \begin{align}
 	\frac{\|a - \|T_a\|\hat a\|^2}{\|T_a\|^2\|a-\hat a\|^2}
 	&= \frac{1}{\|T_a\|^2} \frac{\|a-\hat a + (1-\|T_a\|)\hat a\|^2}{\|a-\hat a\|^2}\\
 	&= \frac{1}{\|T_a\|^2} \frac{\|a-\hat a\|^2 + 2\langle a-\hat a, (1-\|T_a\|)\hat a) \rangle + (1-\|T_a\|)^2\|\hat a\|^2}{\|a-\hat a\|^2}\\
 	&= \frac{1}{\|T_a\|^2} \left( 1 + \frac{2\langle a, (1-\|T_a\|)\hat a) \rangle - 2(1-\|T_a\|) + (1-\|T_a\|)^2}{\|a-\hat a\|^2} \right)\\
 	&= \frac{1}{\|T_a\|^2} \left( 1 + \frac{2\langle a, (1-\|T_a\|)\hat a) \rangle - 1 + \|T_a\|^2}{\|a-\hat a\|^2} \right)\\
 	&= \frac{1}{\|T_a\|^2} \left( 1 + \frac{2\langle a, (1-\|T_a\|)\hat a) \rangle + \frac{1}{s_a^2}\|P_aC_aa\|^2}{\|a-\hat a\|^2} \right). \label{weiter}
 \end{align}
 By Taylor approximation of $\sqrt{1+x}$ at $x=0$ we get
 \[
 	1 - \|T_a\| = 1 - \sqrt{1 + \frac{1}{s_a^2}\|P_aC_aa\|^2} = - \frac{1}{2s_a^2}\|P_aC_aa\|^2 + \mathcal{O}(\frac{1}{s_a^4}\|P_aC_aa\|^4)
 \]
 Plugging this into \eqref{weiter} yields
 \begin{align}
 	\frac{\|a - \|T_a\|\hat a\|^2}{\|T_a\|^2\|a-\hat a\|^2}
 	= \frac{1}{\|T_a\|^2} \left( 1 + \frac{(1-\langle a,\hat a \rangle)\|P_aC_aa\|^2 
	+ 2 \langle a,\hat a\rangle \mathcal{O}(\frac{1}{s_a^2}\|P_aC_aa\|^4)}{s_a^2\|a-\hat a\|^2} \right)
 	\label{eq:proof_firstterm}
 \end{align}
 In order to calculate the limit of this expression, we first consider
 \begin{align*}
 	\lim_{a\to \hat a} s_a \|a-\hat a\|
 	= \lim_{a\to \hat a} \sum_{i=1}^N \|a-\hat a\| \frac{(a^\tT y_i)^2}{\|P_ay_i\|}
 	=\lim_{a\to \hat a} \frac{\|a-\hat a\|}{\|P_a\hat a\|} \frac{(a^\tT y_k)^2}{\|y_k\|}
 	\end{align*}
 and since
 \[
 	\lim_{a\to \hat a} \frac{\|a-\hat a\|}{\|P_a\hat a\|} 
	= \lim_{a\to \hat a} \frac{2(1-\langle a,\hat a \rangle)}{(1-\langle a,\hat a\rangle)(1+\langle a,\hat a\rangle)} 
	= \lim_{a\to \hat a} \frac{2}{1+\langle a,\hat a \rangle}
	=1,
 \]
finally
$$
\lim_{a\to \hat a} s_a \|a-\hat a\| = \|y_k\|.
$$
The remainder of the Taylor approximation converges to zero as $\|P_aC_aa\|$ is bounded from above, 
while $s_a$ goes to infinity. 
Together with \eqref{eq:proof_firstterm} this gives the limit of the first term,
 \begin{align}
 	\lim_{a\to \hat a} \frac{1}{\|T_a\|^2} \left( 1 + \frac{(1-\langle a,\hat a \rangle)\|P_aC_aa\|^2 
	+ \mathcal{O}(\frac{1}{s_a^2}\|P_aC_aa\|^4)}{s_a^2\|a-\hat a\|^2} \right)
 	= 1
 \end{align}
 For the second term in \eqref{eq:proof_terms} we calculate
 \begin{align}
 	L =&\lim_{a\to \hat a}\frac{2\langle a-\|T_a\|\hat a,\frac{1}{s_a}P_a C_a a \rangle 
	+ \frac{1}{s_a^2}\|P_aC_aa\|^2}{\|T_a\|^2\|a-\hat a\|^2}\\
 	&= \lim_{a\to \hat a} \frac{-2\|T_a\|s_a\langle \hat a,G_{a,k} 
	+ a^\tT y_k\frac{P_ay_k}{\|P_ay_k\|} \rangle + \|G_{a,k} + a^\tT y_k\frac{P_ay_k}{\|P_ay_k\|}\|^2}{\|T_a\|^2s_a^2\|a-\hat a\|^2} \label{num}
	\end{align}
	Now it is straightforward to check that 
	$$
	\lim_{a\to \hat a} \big( \sum_{i \not = k} \frac{(a^\tT y_i)^2}{\|P_a y_i\|} \big) \langle \hat a,G_{a,k} + a^\tT y_k\frac{P_ay_k}{\|P_ay_k\|} \rangle  = 0
	$$
	so that by definition of $s_a$ the  term \eqref{num} becomes
	\begin{align*}
	 	L\coloneqq &\lim_{a\to \hat a} \frac{-2\|T_a\|
		\frac{(a^\tT y_k)^2}{\|P_ay_k\|} \left\langle \hat a,G_{a,k} 
	+ a^\tT y_k\frac{P_ay_k}{\|P_ay_k\|} \right\rangle + \|G_{a,k} + a^\tT y_k\frac{P_ay_k}{\|P_ay_k\|}\|^2}{\|T_a\|^2s_a^2\|a-\hat a\|^2}.
	\end{align*}
	Using that $G_{a,k} = P_a C_{a,k} a$, $y_k = \hat a \|y_k\|$ and $P_a$ is an orthogonal projector, we can simplify  
	\begin{align*}
	\frac{(a^\tT y_k)^2}{\|P_ay_k\|} \langle \hat a,G_{a,k} \rangle  
	&=  
	\frac{(a^\tT y_k)^2}{\|y_k\|} \left \langle \frac{P_a y_k}{\|P_a y_k\|},G_{a,k} \right\rangle,
	\\
	\frac{(a^\tT y_k)^2}{\|P_ay_k\|} \left \langle \hat a,a^\tT y_k \frac{P_ay_k}{\|P_ay_k\|} \right\rangle
	&= 
	\frac{(a^\tT y_k)^3}{\|y_k\|} \left\langle \frac{P_ay_k}{\|P_ay_k\|},\frac{P_ay_k}{\|P_ay_k\|} \right\rangle = \frac{(a^\tT y_k)^3}{\|y_k\|},
	\end{align*}
	so that
	\begin{align*}
	 	L
 	&= \lim_{a\to \hat a} \frac{\|G_a,k\|^2 + (a^\tT y_k)^2 - 2\|T_a\| \frac{(a^\tT y_k)^3}{\|y_k\|} + a^\tT y_k
	\langle \frac{P_ay_k}{\|P_ay_k\|}, G_{a,k} \rangle \left( -2\|T_a\|\frac{a^\tT y_k}{\|y_k\|} + 2 \right)}{\|T_a\|s_a^2\|a-\hat a\|^2}.
 \end{align*}
 As $\langle \frac{P_ay_k}{\|P_ay_k\|}, G_{a,k} \rangle$ is bounded, 
$\lim_{a\to \hat a} a^\tT y_k = \|y_k\|$ and $\lim_{a\to \hat a} \|T_a\|=1$ 
we get
 \[
 	\lim_{a\to \hat a}\frac{2\langle a-\|T_a\|\hat a,\frac{1}{s_a}P_aC_aa \rangle + \frac{1}{s_a^2}\|P_aC_aa\|^2}{\|T_a\|^2\|a-\hat a\|^2} 
	= \frac{\|G_{\hat a,k}\|^2 - \|y_k\|^2}{\|y_k\|^2}.
 \]
 Plugging the results into \eqref{eq:proof_terms} yields the assertion
 \[
 	\lim_{a\to \hat a} \frac{\|T(a) - \hat a\|^2}{\|a-\hat a\|^2} 
	= 1 + \frac{\|G_{\hat a,k}\|^2 - \|y_k\|^2}{\|y_k\|^2} = \frac{\|G_{\hat a,k}\|^2}{\|y_k\|^2}.
 \]
\end{proof}

Finally, we need the Kurdyka–Łojasiewicz property of functions \cite{ABRS2010}:
The function $f\colon \R^d \to \R \cup \{+\infty\}$ with Fr\'echet limiting subdifferential $\partial f$, see \cite{mord2006},
is said to have the \emph{Kurdyka–Łojasiewicz (KL) property} at $x^* \in \dom \partial f$ 
if there exist $\eta \in (0, +\infty)$, 
a neighborhood $U$ of $x^*$ and a continuous concave function $\phi\colon [0,\eta) \to \R_{\geq 0}$ such that
	\begin{enumerate}
		\item $\phi(0) = 0$,
		\item $\phi$ is $C^1$ on $(0,\eta)$,
		\item for all $s\in(0,\eta)$ it holds $\phi'(s)>0$,
		\item for all $x\in U\cup [f(x^*)< f <f(x^*) + \eta]$, 
		the Kurdyka–Łojasiewicz inequality  $\phi'(f(x)-f(x^*)) \dist(0,\partial f(x)) \geq 1$ holds true.
	\end{enumerate}
	A proper, lower semi-continuous (lsc) function which satisfies the KL property at each point of $\dom \partial f$ is called 
	\emph{KL-function}.
	Typical examples of KL functions are semi-algebraic functions.
	Fundamental works on this subject go back to Łojasiewicz \cite{Lo1963} and Kurdyka \cite{Kur1998}.

The next theorem was proved by Bolte, Attouch and Svaiter \cite[Theorem 2.9]{ABSF13}.

\begin{theorem}\label{KLconv}
	Let $f\colon \R^d \to \R \cup \{\infty\}$ be a KL function. 
	Let $\{ x^{(r)}\}_{r \in \N}$ be a sequence  which fulfills the following conditions:
	\begin{enumerate}
		\item[C1.] There exists $K_1>0$ such that $f(x^{(r+1)}) - f(x^{(r)}) \leq -K_1 \Vert x^{(r+1)} - x^{(r)} \Vert^2$ for every $r\in \N$.
		\item[C2.] There exists $K_2>0$ such that for every $r\in \N$ 
		there exists $w_{r+1} \in \partial f(x^{(r+1)})$ with $\Vert w_{r+1} \Vert \leq K_2 \Vert x^{(r+1)} - x^{(r)} \Vert$.
		where $\partial f$ denotes the Fr\'echet limiting subdifferential of $f$ \cite{mord2006}.
		\item[C3.] There exists a convergent subsequence $\{ x^{(r_j)} \}_{j \in \N}$ with limit $\hat x$ and $f(x^{(r_j)}) \to f(\hat x)$.
	\end{enumerate}
	Then the whole sequence $\{ x^{(r)} \}_{r \in \N}$ converges to $\hat x$ 
	and $\hat x$ is a critical point of $f$ in the sense that $0 \in \partial f(x)$.
	Moreover the sequence has finite length, i.e.,
	\[\sum_{r=0}^\infty \Vert x^{(r+1)} - x^{(r)} \Vert < \infty.\]
\end{theorem}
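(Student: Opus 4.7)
The plan is to reproduce the Attouch--Bolte--Svaiter argument by combining the three hypotheses in the standard KL telescoping framework. First, by \textbf{C1} the sequence $\{f(x^{(r)})\}_r$ is non-increasing, and by \textbf{C3} it must then converge to $f(\hat x)$ from above. If $f(x^{(r_0)}) = f(\hat x)$ for some finite $r_0$, then \textbf{C1} forces $x^{(r)} = x^{(r_0)}$ for all $r \ge r_0$ and the assertion is trivial; so I may assume $f(x^{(r)}) > f(\hat x)$ for all $r$. The crucial preparatory step is to fix an index $R$ large enough that the tail $\{x^{(r)}\}_{r \ge R}$ lies in the KL neighborhood $U$ of $\hat x$ with $0 < f(x^{(r)}) - f(\hat x) < \eta$. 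The convergent subsequence from \textbf{C3} supplies an iterate arbitrarily close to $\hat x$ with function value arbitrarily close to $f(\hat x)$, which is the entry point for this localization.

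Next I set $\Delta_r := \phi(f(x^{(r)}) - f(\hat x))$, which is well-defined on the tail and tends to $0$. Concavity of $\phi$ yields
\[
\Delta_r - \Delta_{r+1} \;\ge\; \phi'\bigl(f(x^{(r)}) - f(\hat x)\bigr)\,\bigl(f(x^{(r)}) - f(x^{(r+1)})\bigr).
\]
The KL inequality applied at $x^{(r)}$ together with \textbf{C2} gives
$\phi'(f(x^{(r)}) - f(\hat x)) \ge 1/\|w_r\| \ge 1/\bigl(K_2\,\|x^{(r)}-x^{(r-1)}\|\bigr)$,
while \textbf{C1} supplies $f(x^{(r)}) - f(x^{(r+1)}) \ge K_1\,\|x^{(r+1)}-x^{(r)}\|^2$. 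Combining these,
\[
K_2\,\|x^{(r)}-x^{(r-1)}\|\,(\Delta_r - \Delta_{r+1}) \;\ge\; K_1\,\|x^{(r+1)}-x^{(r)}\|^2 .
\]
Taking square roots and applying $2\sqrt{ab} \le a + b$ with $a = \|x^{(r)}-x^{(r-1)}\|$ and $b = \tfrac{K_2}{K_1}(\Delta_r - \Delta_{r+1})$ gives the fundamental recursion
\[
\|x^{(r+1)}-x^{(r)}\| \;\le\; \tfrac{1}{2}\,\|x^{(r)}-x^{(r-1)}\| + \tfrac{K_2}{2K_1}(\Delta_r - \Delta_{r+1}).
\]

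Summing from $R$ to any $N > R$, the first term on the right absorbs half of the corresponding left-hand-side sum and the $\Delta_r$ terms telescope, producing a bound of the form $\sum_{r=R}^{N}\|x^{(r+1)}-x^{(r)}\| \le \|x^{(R)}-x^{(R-1)}\| + \tfrac{K_2}{K_1}\Delta_R$, uniform in $N$. This proves finite length, hence the Cauchy property and convergence of $\{x^{(r)}\}$; its limit must be $\hat x$ by \textbf{C3}. Finally, \textbf{C2} forces $w_{r+1} \to 0$, so closedness of the graph of the Fr\'echet limiting subdifferential (combined with $f(x^{(r)}) \to f(\hat x)$) yields $0 \in \partial f(\hat x)$.

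The main obstacle is the first step: keeping all tail iterates inside $U$ so that the KL inequality actually applies at every $x^{(r)}$. I would handle this by a self-improving induction — assume iterates $x^{(R)}, \ldots, x^{(s)}$ remain in a ball $B(\hat x, \rho) \subset U$, run the telescoping estimate above only as far as $s$, and use the resulting length bound $\|x^{(s+1)}-\hat x\| \le \|x^{(R)}-\hat x\| + \|x^{(R)}-x^{(R-1)}\| + \tfrac{K_2}{K_1}\Delta_R$ to verify $x^{(s+1)}$ is still in the ball, provided the initial quantities are chosen small enough via \textbf{C3}. All remaining calculations are routine once this localization is secured.
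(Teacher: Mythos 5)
Your argument is correct and is exactly the standard Attouch--Bolte--Svaiter proof: the paper itself offers no proof of this theorem but cites \cite{ABSF13} (Theorem 2.9), whose telescoping-with-$\phi$ argument you have faithfully reconstructed, and the localization step you flag as the main obstacle is precisely their Lemma 2.6/Corollary 2.7, which the paper reproduces as Corollary \ref{KLBound}. Nothing further is needed.
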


Clearly, if $f$ is differentiable at $x$, 
then $x$ is a critical point of $f$, if and only if $\nabla f(x) = 0$.
We will only need this case.

Similar arguments as used in the proof of the above theorem lead to the next corollary, see \cite[Corollary 2.7]{ABSF13}.

\begin{corollary}\label{KLBound}
	Let $f\colon \R^d \to \R \cup \{+\infty\}$ be a proper, lsc function 
	which satisfies the KL property at $x^*$. 
	Denote by $U$, $\eta$ and $\phi$ the objects appearing in the definition of the KL function.
	Let $\delta, \rho > 0$ be such that $B(x^*,\delta)\subset U$ with $\rho \in (0,\delta)$. 
	Consider a finite sequence $x^{(r)}$, $r= 0, \dots,n$, 
	which satisfies the Conditions C1 and C2 of Theorem \ref{KLconv} and additionally
	\begin{enumerate}
		\item[C4.] $f(x^*) \leq f(x^{(0)}) < f(x^*) + \eta$,
		\item[C5.]  $\Vert x^* - x^{(0)} \Vert + 2\sqrt{\frac{f(x^{(0)}) -f(x^*)}{K_1}} + \frac{K_2}{K_1} \phi(f(x^{(0)}) - f(x^*)) \leq \rho$.
	\end{enumerate}
	If for all $r=0,\dots, n$ it holds 
	$$x^{(r)} \in B(x^*,\rho) \quad \Longrightarrow \quad  x^{(r+1)} \in B(x^*,\delta) \; \mathrm{and} \;  f(x^{(r+1)})\geq f(x^*),$$
	then $x^{(r)} \in B(x^*,\rho)$ for all  $r=0,\dots, n+1$.
\end{corollary}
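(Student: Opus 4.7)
The plan is to proceed by induction on $r$, showing that the partial sums $\sum_{k=0}^r \|x^{(k+1)} - x^{(k)}\|$ can be absorbed into the budget provided by C5, and then concluding by the triangle inequality. The base case $r = 0$ is immediate since C5 already dominates $\|x^{(0)} - x^*\|$. For the induction step, assume $x^{(0)}, \ldots, x^{(r)} \in B(x^*, \rho)$; the hypothesis then produces $x^{(r+1)} \in B(x^*, \delta) \subset U$ with $f(x^{(r+1)}) \geq f(x^*)$, and the goal is to sharpen this from $\delta$ back down to $\rho$.

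Using C1 to note that $\{f(x^{(k)})\}$ is non-increasing and combining this with $f(x^{(k+1)}) \geq f(x^*)$, I may assume the strict inequality $f(x^{(k)}) > f(x^*)$ for each $k = 0, \ldots, r$; otherwise the iteration becomes stationary (C1 forces $x^{(k+1)} = x^{(k)}$) and the claim reduces to a shorter case. Combined with C4 this places all these iterates in $U \cap \{f(x^*) < f < f(x^*) + \eta\}$, so the KL inequality at $x^*$ is applicable. Writing $\phi_k := \phi(f(x^{(k)}) - f(x^*))$ and $a_k := \|x^{(k+1)} - x^{(k)}\|$, I would chain the concavity estimate $\phi_{k-1} - \phi_k \geq \phi'(f(x^{(k)}) - f(x^*))\, (f(x^{(k)}) - f(x^{(k+1)}))$ with the KL inequality and with C2 (which furnishes a subgradient of norm at most $K_2 a_{k-1}$) and C1 (giving $f(x^{(k)}) - f(x^{(k+1)}) \geq K_1 a_k^2$), arriving at the key recursion
\[ a_k^2 \leq \frac{K_2}{K_1}\, a_{k-1}\, (\phi_{k-1} - \phi_k). \]
The inequality $2\sqrt{uv} \leq u + v$ then yields $2 a_k \leq a_{k-1} + \tfrac{K_2}{K_1}(\phi_{k-1} - \phi_k)$.

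Summing this over $k = 1, \ldots, r$ and telescoping the $\phi_k$ terms while noticing that, after re-indexing, the shifted sum $\sum a_{k-1}$ cancels almost all of the $2 a_k$ on the left (only $a_0$ and a harmless $+a_r$ survive), gives $\sum_{k=0}^{r} a_k \leq 2 a_0 + \tfrac{K_2}{K_1} \phi_0$. Since C1 applied once implies $a_0 \leq \sqrt{(f(x^{(0)}) - f(x^*))/K_1}$, the triangle inequality produces
\[ \|x^{(r+1)} - x^*\| \leq \|x^{(0)} - x^*\| + 2\sqrt{\frac{f(x^{(0)}) - f(x^*)}{K_1}} + \frac{K_2}{K_1}\phi(f(x^{(0)}) - f(x^*)) \leq \rho, \]
where the last inequality is exactly C5. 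This closes the induction and shows $x^{(r)} \in B(x^*, \rho)$ for all $r = 0, \ldots, n+1$.

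The main obstacle I anticipate is the bookkeeping of the telescoping/AM-GM step: one has to verify that the factor $2$ in front of $a_k$ is precisely what is needed to cancel the shifted sum and still leave a clean bound, and one must handle the boundary term $a_0$ through C1 rather than through the KL estimate. The other delicate point is justifying the applicability of the KL inequality at every iterate used, which is what motivates the up-front reduction to $f(x^{(k)}) > f(x^*)$. Everything else is the same telescoping argument that underlies the proof of Theorem \ref{KLconv}, now simply executed for finitely many indices.
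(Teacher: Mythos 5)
Your argument is correct and is precisely the telescoping/KL argument of Attouch--Bolte--Svaiter that the paper itself invokes without writing out (it only points to \cite[Corollary 2.7]{ABSF13}): induction, AM--GM on the KL-derived recursion, telescoping of the $\phi$-terms, and C5 as the radius budget, with the degenerate case $f(x^{(k)})=f(x^*)$ correctly dispatched via C1. The only blemish is a harmless off-by-one in the key recursion --- applying the KL inequality at $x^{(k)}$ together with C2 (at index $k-1$) and C1 actually gives $a_k^2\le\tfrac{K_2}{K_1}a_{k-1}\bigl(\phi_k-\phi_{k+1}\bigr)$ rather than $\phi_{k-1}-\phi_k$ --- but the telescoped sum is bounded by $\phi_0=\phi(f(x^{(0)})-f(x^*))$ either way, so the final estimate and the induction go through unchanged.
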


Now we can prove our main convergence theorem.

\begin{theorem}
  The sequence $\{a^{(r)}\}_{r}$ generated by Algorithm \ref{alg:onedir} converges to a critical point of $E$.
\end{theorem}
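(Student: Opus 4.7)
The plan is to combine the case analysis of Lemmas~\ref{alg_stop}--\ref{lem:conv_2} with the Attouch--Bolte--Svaiter convergence machinery of Theorem~\ref{KLconv} and Corollary~\ref{KLBound}, applied to the restricted objective $F := E + \iota_{\mathbb S^{d-1}}$. Both $E$ and the sphere are semi-algebraic, so $F$ is semi-algebraic and hence a KL function at every point of its domain. The finite termination case is settled by Lemma~\ref{alg_stop}, so assume $\{a^{(r)}\}_r$ is infinite. Since $\mathcal A$ is finite and $\|a^{(r+1)} - a^{(r)}\| \to 0$ by Lemma~\ref{lem:conv_1}, there is an index $R_0$ beyond which every iterate lies off the anchor set; the accumulation set $\mathcal S$ is non-empty, compact, connected, and by Lemma~\ref{abstieg}, $E(a^{(r)}) \searrow \hat E = E(\hat a)$ for every $\hat a \in \mathcal S$.

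I next argue that every $\hat a \in \mathcal S$ is a critical point. If $\hat a \notin \mathcal A$, this is already obtained inside the proof of Lemma~\ref{lem:conv_1}, where continuity of $T(a) = C_a a / \|C_a a\|$ yields $\hat a = T(\hat a)$ and thus $P_{\hat a} C_{\hat a} \hat a = 0$. If $\hat a \in \mathcal A$, Lemma~\ref{lem:conv_2} gives the local expansion ratio $L := \|G_{\hat a,\mathcal K}\| / \sum_{k \in \mathcal K} \|y_k\|$. Suppose for contradiction $L > 1$; pick $\varepsilon \in (0, L-1)$ and $\rho > 0$ with $\|T(a) - \hat a\| \ge (1+\varepsilon)\|a - \hat a\|$ on $B(\hat a, \rho) \setminus \mathcal A$. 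Tracing the orbit backward from any sufficiently late $a^{(r_0)} \in B(\hat a, \rho/2)$, the expansion bound forces every earlier iterate inside the ball to be strictly closer to $\hat a$ by a factor $(1+\varepsilon)^{-1}$; as the orbit cannot extend backward indefinitely, there is a last exit index $s = s(r_0) < r_0$ at which $a^{(s)} \notin B(\hat a, \rho)$ while $a^{(s+1)} \in B(\hat a, \rho/2)$, producing a step of size at least $\rho/2$. A geometric-series estimate shows $s(r_0) \to \infty$ along the accumulation subsequence, contradicting $\|a^{(r+1)} - a^{(r)}\| \to 0$. Hence $L \le 1$, which is precisely the critical-point condition~(ii).

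To upgrade this to convergence of the full sequence, I apply Corollary~\ref{KLBound} at a chosen $\hat a \in \mathcal S$. Condition~C1 of Theorem~\ref{KLconv} follows from the descent inequality derived in the proof of Lemma~\ref{abstieg} with a uniform constant $K_1$ on a small neighborhood where $\|P_a y_i\|$ is bounded below for $i \notin \mathcal K$. For C2 I exploit that $a^{(r+1)}$ is parallel to $C_{a^{(r)}} a^{(r)}$, whence $P_{a^{(r+1)}} C_{a^{(r)}} a^{(r)} = 0$ and
\begin{equation*}
\nabla E(a^{(r+1)}) = - P_{a^{(r+1)}} \bigl( C_{a^{(r+1)}} a^{(r+1)} - C_{a^{(r)}} a^{(r)} \bigr);
\end{equation*}
the local Lipschitz continuity of $a \mapsto C_a a$ yields $\|\nabla E(a^{(r+1)})\| \le K_2 \|a^{(r+1)} - a^{(r)}\|$, and the sphere's normal cone supplies the rest of the Fr\'echet subgradient, which by Theorem~\ref{propE}.2 is entirely tangent. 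Condition~C3 is free from the accumulation assumption. Starting from an index large enough for C4, C5 of Corollary~\ref{KLBound} to hold and using that all subsequent iterates lie off $\mathcal A$, the full tail is trapped in a small ball around $\hat a$ and converges to it.

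The main obstacle is the anchor case: verifying C1, C2 uniformly in a neighborhood of $\hat a \in \mathcal A$ is delicate because the weights $1/\|P_a y_k\|$ in $C_a$ blow up as $a \to \hat a$ for $k \in \mathcal K$. The saving observation is that the step-size coefficient $s_a$ diverges at the same rate, and the quantitative estimates already established inside the proof of Lemma~\ref{lem:conv_2} (for the ratios $P_a C_a a / s_a$ and $a^\tT y_k / \|y_k\|$) show that the relevant quantities remain bounded. Substituting these in place of a naive Lipschitz bound reduces the situation to the non-anchor regime and closes the KL argument, giving convergence of the whole sequence to a single critical point.
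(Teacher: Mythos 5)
Your overall architecture (finite case via Lemma~\ref{alg_stop}, KL machinery for the non-anchor regime, Lemma~\ref{lem:conv_2} for anchor accumulation points) matches the paper, and several pieces are sound or even cleaner than the paper's: the verification of C2 off the anchor set via $P_{a^{(r+1)}}C_{a^{(r)}}a^{(r)}=0$ and local Lipschitz continuity of $a\mapsto C_a a$ is a nice substitute for the paper's law-of-sines argument, and your backward-orbit expansion argument showing that an anchor accumulation point must satisfy $\|G_{\hat a,\mathcal K}\|\le\sum_{k\in\mathcal K}\|y_k\|$ is a legitimate (indeed slightly stronger) variant of the paper's final contradiction, which only invokes Lemma~\ref{lem:conv_2} after full convergence is known.

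The genuine gap is in the last step: you apply Corollary~\ref{KLBound} \emph{centered at} $\hat a\in\mathcal A$ and claim the blow-up of the weights $1/\|P_a y_k\|$, $k\in\mathcal K$, is compensated because $s_a$ diverges at the same rate. Boundedness is not what C2 requires. C2 needs a Fr\'echet subgradient of norm at most $K_2\|a^{(r+1)}-a^{(r)}\|$, and near an anchor point the minimal-norm element of $\partial\bigl(E+\iota_{\mathbb S^{d-1}}\bigr)(a)$ is the tangential gradient $\|P_aC_aa\|$, which decomposes as $G_{a,\mathcal K}+\sum_{k\in\mathcal K}\frac{a^\tT y_k}{\|P_ay_k\|}P_ay_k$; the second sum has norm tending to $\sum_{k\in\mathcal K}\|y_k\|>0$, so $\|P_aC_aa\|$ generically stays bounded away from zero (e.g.\ it is at least $\bigl|\,\|G_{a,\mathcal K}\|-\sum_{k}|a^\tT y_k|\,\bigr|$) while $\|a^{(r+1)}-a^{(r)}\|\to0$. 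Hence C2 fails in any neighborhood of $\hat a$ and no estimate from Lemma~\ref{lem:conv_2} rescues it — that non-vanishing residual gradient is precisely what produces the contraction ratio in that lemma. The paper's route around this is the idea you are missing: if the anchor accumulation point were not the unique one, the accumulation set is connected (Lemma~\ref{lem:conv_1}) and $\mathcal A$ is finite, so there must exist a \emph{non-anchor} accumulation point $\tilde a$; Corollary~\ref{KLBound} is then applied in a ball around $\tilde a$ (where C1 and C2 do hold) to trap the tail of the sequence near $\tilde a$, contradicting that $\hat a\neq\tilde a$ is also an accumulation point. Uniqueness of the accumulation point then yields convergence of the whole bounded sequence to $\hat a$, after which criticality follows from Lemma~\ref{lem:conv_2}. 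You need this detour through a non-anchor accumulation point; the KL argument cannot be closed at the anchor point itself.
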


\begin{proof}
		If the sequence is finite, the claim follows from Lemma~\ref{alg_stop}.
		Assume that the algorithm produces an infinite sequence.
		Since the sequence $(a^{(r)})_{r\in\NN}$ on $\mathbb S^{d-1}$ is bounded,
		there exists a convergent subsequence $(a^{(r_j)})_{j\in\NN}$ with $\lim_{j\to\infty} a^{(r_j)} = \hat a$. 
		Possibly, there exist multiple accumulation points and we distinguish two cases.
		\begin{enumerate}
			\item
			First assume that no accumulation point is in the anchor set.
			It is easy to verify that the function $E$ is semi-algebraic on $\mathbb R^{d}$ and hence fulfills the KL property.
			We will verify that $\{a^{(r)}\}_{r}$ fulfills the remaining conditions C1 and C2 from Theorem~\ref{KLconv}.
			From the proof of Lemma~\ref{abstieg}, Case 1, we get
			\[E(a^{(r)}) - E(a^{(r+1)})  
			\geq 
			\sum_{i=1}^N  \frac{\Vert  \langle a^{(r)},y_i\rangle a^{(r)} - \langle a^{(r+1)},y_i \rangle a^{(r+1)}  \Vert^2}{2\Vert P_{a^{(r)}}y_i\Vert}.\]
			Further, it holds $\Vert P_{a^{(r)}} y_i \Vert \leq \Vert y_i \Vert \leq \max_{i=1,\dots,N} \Vert y_i \Vert < \infty$ and
			there exists $m>0$ such that
			\[\min_{{a\in\mathrm{span}(Y)}\atop{\Vert a \Vert = 1}} \max_{i=1,\dots,N}\vert \langle a,y_i\rangle \vert =  \min_{{a\in\mathrm{span}(Y)}\atop{\Vert a \Vert = 1}} \|Y^\tT a \|_\infty \ge m.\]
			Using that $a^{(r)}\in \mathrm{span}(Y)$ for all $r$ and
			$\lim_{r \to \infty} \Vert a^{(r+1)} - a^{(r)} \Vert = 0$ by Lemma~\ref{lem:conv_1}, 
			we can find $i \in \{1,\ldots,N\}$ such that 
			$\vert \langle a^{(r)},y_i\rangle \vert > \frac{m}{2}$, 
			$\vert \langle a^{(r+1)},y_i\rangle \vert > \frac{m}{2}$ 
			and both scalar products have the same sign for $r$ large enough.
			Hence we can estimate
			\[
			E(a^{(r)}) - E(a^{(r+1)})  \geq C \left\Vert a^{(r)}  - a^{(r+1)} \frac{\langle a^{(r+1)},y_i \rangle}{\langle a^{(r)},y_i\rangle} \right\Vert^2,
			\quad C > 0,
			\]
			where w.l.o.g $\frac{\langle a^{(r+1)},y_i \rangle}{\langle a^{(r)},y_i\rangle}\geq 1$.
			Using the projection onto the sphere, we can finally estimate
			\[E(a^{(r)}) - E(a^{(r+1)})  \geq C \Vert a^{(r)}  - a^{(r+1)}\Vert^2.\]
			
			Next we check the second condition C2.
			Since $\lim_{r \to \infty} \Vert a^{(r+1)} -a^{(r)} \Vert = 0$ 
			and none of the 
			$\pm y_i/\|y_i\|$, $i=1,\ldots,N$, is an accumulation point, 
			we can find open balls $B_i$ around every $y_i$ 
			such that for all $r$ large enough we have 
			$\overline{a^{(r)} a^{(r+1)}} \subset \Omega \coloneqq \R^d\setminus \bigcup_{i=1}^N B_i$.
			The function $E$ is smooth on an open set containing the compact set $\Omega$ 
			and hence there exists $C>0$ such that
			\[\Vert \nabla E (a^{(r+1)}) - \nabla E(a^{(r)}) \Vert \leq C\Vert a^{(r+1)} - a^{(r)} \Vert\]
			for all $r$ large enough.
			Further, note that the sequence $s_{a^{(r)}}$ is bounded from above on $\Omega$ 
			which implies
			\[
			\Vert \nabla E (a^{(r+1)})\Vert 
			\leq 
			\tilde C \left(\Vert a^{(r+1)} - a^{(r)} \Vert +\left\Vert \frac{\nabla E(a^{(r)})}{s_{a^{(r)}}} \right\Vert \right).
			\]
			Using the iteration law $a^{(r+1)} = \Pi_{\Ss^{d-1}}(a^{(r)} - \frac{\nabla E(a^{(r)})}{s_{a^{(r)}}})$ together with the fact that 
			$\nabla E(a^{(r)})$ is in the tangential plane of $\mathbb S^{d-1}$ at $a^{(r)}$, we get by
			the law of sines, see Fig.~\ref{fig:law_sines},
			\[
			\frac{\Vert a^{(r+1)} - a^{(r)} \Vert s_{a^{(r)}}}{\Vert \nabla E(a^{(r)}) \Vert} 
			\geq 
			\sin\left(\frac{\pi}{2}- \angle(a^{(r)} a^{(r+1)})\right),\]
			where the right hand side converges to one since $\angle(a^{(r)} a^{(r+1)})$ gets arbitrary small.
			Hence, the right hand side is larger than 
			$\frac{1}{2}$ for $r$ large enough 
			and we can estimate
			\[\Vert \nabla E (a^{(r+1)})\Vert \leq 3 \tilde C \Vert a^{(r+1)} - a^{(r)} \Vert.\]
			Now, by Theorem~\ref{KLconv}, only one accumulation point exists which is also a critical point.

			\begin{figure}
			\begin{center}
			\includegraphics[width=0.4\textwidth, angle=0]{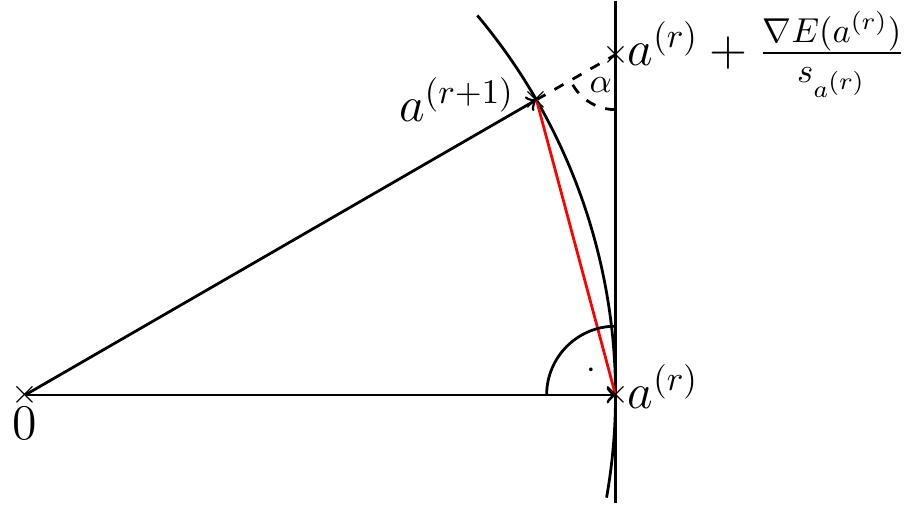}
			\end{center}
			\caption{Sketch law of sines for our setting.} \label{fig:law_sines}
			\end{figure}
			
			\item It remains to examine the case that some accumulation point is an anchor point
			$\hat a $ to the vertices $y_k$, $k \in \mathcal{K}$.
			Assume that there exists another accumulation point. 
			Then, by Lemma~\ref{lem:conv_1}, there exists 
			an accumulation point $\tilde a$ which is not an anchor point.
			We can find a ball $B(\tilde a, R)$ around $\tilde a$ which has positive distance to all anchor points. 
			Next, for all the iterates $a^{(r)} \in B( \tilde a, \frac{R}{2} )$ and  $r$ large enough
			we can reproduce step one of the proof 
			to show that C1 and C2 are fulfilled.
			Be the continuity of $f$ and $\phi$, see also the proof of \cite[Theorem~2.9]{ABSF13}, 
			we can choose a ball $B(\tilde a, \delta) \subset B( \tilde a, \frac{R}{2} )\cap U$ 
			(where $U$ is from the definition of the KL property), $\rho \in (0,\delta)$ 
			and a starting iterate $a^{(r_0)} \in B(\tilde a, \rho)$ which satisfies C4  and C5
			from Corollary~\ref{KLBound}.
			Since $\lim_{r \to \infty} \Vert a^{(r+1)} - a^{(r)} \Vert = 0$ 
			and $\tilde a$ is an accumulation point, we can choose $r_0$ such that
			\[a^{(r)} \in B(\tilde a,\rho) \implies a^{(r+1)} \in B(\tilde a,\delta), \; E(a^{(r+1)})\geq E(\tilde a)\]
			for all $r \geq r_0$.
			Either all iterates after $a^{(r_0)}$ are in $B(\tilde a, \rho)$
			or there is a finite sequence $a^{(r_0)},a^{(r_0+1)} ,\ldots, a^{(r_n)}$ such that
			$a^{(r_{n}+1)}$ is the first element outside $B(\tilde a, \rho)$.
			But then, by Corollary~\ref{KLBound}, also the iterate $a^{(r_n+1)}$ is inside $B(\tilde a,\rho)$ and hence all iterates stay in $B(\tilde a,\rho)$ 
			which is an contradiction.
			Consequently, the whole sequence converges to the anchor point $\hat a$.
			
			It remains to show that the anchor point is critical. By Lemma \ref{lem:conv_2} we know that
			\[
			\lim_{r\to\infty} \frac{\|T(a^{(r)}) - \hat a\|}{\|a^{(r)} - \hat a\|} = \frac{\|G_{\hat a,\mathcal{K} }\|}{\sum_{k \in \mathcal{K}} \|y_k\|}.
			\]
			If  $\hat a$ is not a critical point, i.e.~$\|G_{\hat a,\mathcal{K}}\|>{\sum_{k \in \mathcal{K}} \|y_k\|}$, 
			then the sequence cannot converge to $\hat a$, which is a contradiction.		
				\end{enumerate}
\end{proof}

At this point it should be mentioned that Algorithm \ref{alg:onedir} 
may converge to a local minimum as our functional is non-convex. 
Performing the algorithm multiple times with random initialization $a^{(0)}$ 
and comparing the function values of the results increases the probability to reach a global minimizer.
The number of local minimizers and how pronounced they are, depends on the data. 
In general, with fewer data points and more extreme outliers, 
we tend to get more pronounced local minima. 
However, in most applications and also in the numerical part of this paper, 
this is not an issue as a high number of data points is available. 

\section{Remarks on the Offset}\label{sec:offset}
Finally, we want to address briefly the issue of choosing a suitable offset for the robust PCA model.

As already mentioned in the introduction, in classical PCA, solving
\begin{equation}
\argmin_{A \in \mathbb S_{d,K}, b \in \mathbb R^d} \sum_{i=1}^N \min_{t \in \mathbb R^K} \|A\, t + b - x_i \|^2
= \argmin_{A \in \mathbb S_{d,K}, b \in \mathbb R^d} \sum_{i=1}^N \|P_A(x_i -b)\|^2,
\end{equation}
leads to the unique affine subspace
$$
\{\hat A t + \hat b: t \in \mathbb R^K\},
$$
where $\hat b \in \mathbb R^d$ can be chosen as mean (bias) $\bar b := \frac{1}{N}(x_1 + \ldots + x_N)$ of the data.
For the robust setting, we have assumed so far that the offset $\hat b$ is given, e.g., as geometric median of the data.
However, the problem
\begin{equation}\label{prob_tag}
\argmin_{A \in \mathbb S_{d,K}, b \in \mathbb R^d} \sum_{i=1}^N \min_{t \in \mathbb R^K} \|A\, t + b - x_i \|
= 
\argmin_{A \in \mathbb S_{d,K}, b \in \mathbb R^d}  \sum_{i=1}^N \|P_A(x_i -b)\|
\end{equation}
has in general not the geometric median as correct offset as we will see in the following.

\begin{lemma}\label{lem:line}
Let $x_i \in \mathbb R^2$, $i =1,\ldots,N$.
Then there exists a minimizing pair
\[
(\hat a, \hat b) \in \argmin_{a \in \mathbb S^{1}, b\in\R^2}\sum_{i=1}^N \min_{t \in \mathbb R} \|a t + b - x_i\|
\]
such that the line $g(t) := \hat at + \hat b$ passes through two of the points. If $N$ is odd, then the minimizing line always passes through two points.
\end{lemma}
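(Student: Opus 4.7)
The plan is to reduce the 2D problem to two nested 1D problems: a classical median problem for $b$ (with $a$ fixed) and a sum-of-$|\sin|$ problem for the rotation angle (with a pivot point fixed). In $\mathbb R^2$, writing $a^\perp$ for a unit vector perpendicular to $a$, one has $\min_t \|at+b-x_i\| = |\langle a^\perp, x_i-b\rangle|$, so the objective equals
\[ F(a,b) = \sum_{i=1}^N |\langle a^\perp, x_i-b\rangle|. \]
For fixed $a$, set $u_i := \langle a^\perp, x_i\rangle$ and $c := \langle a^\perp, b\rangle$; the inner problem becomes the univariate median problem $\min_c \sum_i |u_i-c|$, whose minimizing set is the unique median for $N$ odd, and the closed interval $[u_{(N/2)}, u_{(N/2+1)}]$ whose endpoints are data values for $N$ even. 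In both cases one may pick $c = u_k$ for some index $k$, which is equivalent to choosing $b$ on the line through $x_k$ with direction $a$, so that the line passes through $x_k$.

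Next, with such a pivot $x_k$ fixed, I would study $g_k(a) := F(a,x_k)$ on $\mathbb S^1$. Parameterizing $a = (\cos\theta,\sin\theta)$ and $x_i-x_k = r_i(\cos\phi_i,\sin\phi_i)$ gives $g_k(\theta) = \sum_i r_i|\sin(\phi_i-\theta)|$, which has kinks precisely at $\theta \equiv \phi_j \pmod\pi$ for indices $j$ with $r_j > 0$. On any open arc between consecutive kinks, the signs of $\sin(\phi_i-\theta)$ stay constant, so $g_k(\theta)$ collapses to a single sinusoid $A\cos\theta + B\sin\theta$, yielding the key identity $g_k''(\theta) = -g_k(\theta) \le 0$. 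Hence $g_k$ is concave on each smooth arc, and strictly so outside the degenerate case where all data points coincide with $x_k$. Consequently the minimum of $g_k$ is attained at some kink $\theta^* = \phi_j$, i.e., the minimizing direction is parallel to $x_j - x_k$, so the line through $x_k$ passes through a second data point $x_j$.

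For the existence assertion I would start from an arbitrary optimal $(\hat a_0,\hat b_0)$, apply the first step to replace $\hat b_0$ by a pivot $x_k$ (optimality is preserved because $F(a,\cdot)$ depends on $b$ only through $\langle a^\perp, b\rangle$, and both $\hat b_0$ and $x_k$ achieve an optimal median value for this inner product), and then apply the second step to replace $\hat a_0$ by a minimizer $\hat a$ of $g_k$. Optimality is again preserved, since $g_k(a) = F(a,x_k) \ge M^* := \min F$ for all $a$ while $g_k(\hat a_0) = M^*$, forcing $\min g_k = M^*$. The resulting line $\{\hat a t + x_k : t \in \mathbb R\}$ then passes through both $x_k$ and $x_j$ by the concavity step.

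For the $N$ odd statement, uniqueness of the 1D median forces every optimal pair $(\hat a,\hat b)$ to satisfy $\langle \hat a^\perp, \hat b\rangle = \langle \hat a^\perp, x_{k^*}\rangle$ for the unique median index $k^*$, so the line automatically passes through $x_{k^*}$. Then $g_{k^*}(\hat a) = F(\hat a,\hat b) = M^* \le g_{k^*}(a)$ for all $a$, so $\hat a$ minimizes $g_{k^*}$; strict concavity of $g_{k^*}$ on each smooth arc rules out interior minimizers (the alternative would force $g_{k^*} \equiv 0$ on an arc, i.e., all points equal to $x_{k^*}$, a trivial case), so $\hat a$ sits at a kink $\phi_j$ with $j \neq k^*$ and the line passes through $x_{k^*}$ and $x_j$. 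The main obstacle in the whole argument is establishing the concavity identity $g_k'' = -g_k$ on smooth pieces; once it is in hand, the rest is bookkeeping across the two nested optimizations.
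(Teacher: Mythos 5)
Your proof is correct and follows essentially the same strategy as the paper's: first reduce to a line through one data point via the one-dimensional problem in the normal direction (your median formulation is just a more explicit packaging of the paper's perturbation/shift argument), then use the identity $\varphi''=-\varphi<0$ for the angular objective to force the optimal direction onto a kink, i.e.\ onto a second data point. Your treatment of the $N$ odd case and of the degenerate/tie situations is somewhat more careful than the paper's, but the underlying ideas coincide.
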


\begin{proof}
Assume that $g$ is an optimal line which does not go through any of the points.
Let $N_l$, resp. $N_r$ be the number of points on the left, resp., right hand side of $g$.
Then, shifting the line by $\delta >0$
into the direction of the left, resp. right nearest point changes the distance sum by 
$(N_r-N_l)\delta$, resp.  $(N_l-N_r)\delta$. If $N_l \not = N_r$, 
then one of the new distance sums becomes smaller than the original minimal one. 
Hence, $N_l = N_r$, so that one point has to be on the line if $N$ is odd
and there is a line with smallest distance sum going through one point if $N$ is even. W.l.o.g., let $g$ go through $x_N$.
Then, choosing $b=x_N$ we have to show that $g$ goes through a second point.
Taking polar coordinates $y_i:= x_i - x_N = c_i \e^{i \gamma_i}$, $i\in \{1,\ldots,N-1\}$,
and
$a = \e^{i\alpha}$, the distance sum becomes
\[
 \sum_{i=1}^{N-1} \min_{t_i} |t_i \e^{i\alpha} - c_i \e^{i \gamma_i}| 
= \sum_{i=1}^{N-1} \min_{t_i} |t_i - c_i \e^{i (\gamma_i - \alpha)}|
= \sum_{i=1}^{N-1} |c_i \sin(\alpha - \gamma_i)| =: \varphi(\alpha).
\]
If $y_i \not \in g$ for all $i=1,\ldots,N-1$, then $\varphi$ is smooth and
\[
\varphi''(\alpha) = -\varphi(\alpha) < 0
\]
so that $\alpha$ cannot be a local minimizer. Consequently, at least one more $x_i$ must lie on $g$.
\end{proof}

Using the decomposition of $A \in \mathbb S_{d,d-1}$ into Givens rotation matrices, 
the claim can be generalized to hyperplanes of dimension $d-1$ having minimal 
Euclidean distance from data in $\mathbb R^d$, $d \ge 2$,
see \cite{schneck2018}.
However, it would be interesting if in this case also $d-1$ data points 
can lie within the minimizing hyperplane instead of just two of them.
\\

Based on the lemma, the following example shows that the geometric median
is in general not in the solution set of \eqref{prob_tag}.

\begin{example} \label{ex:tag}
Let $x_i \in \mathbb R^2$, $i=1,2,3$ span a triangle with sides
$s_1 = \|x_2-x_3\|$, $s_2 = \|x_1-x_3\|$, $s_3 = \|x_1-x_2\|$,
where $s_1 \le s_2 < s_3$
and angles smaller than $120^\circ$.
By Lemma \ref{lem:line}, 
the line having minimal Euclidean distance from the three
points has to go through two points.
Since the height $h_i$ at side $s_i$, $i=1,2,3$ fulfills 
$$h_i = \frac{2}{s_i}\left(s(s-s_1)(s-s_2)(s-s_3)\right)^\frac12, \quad s := \frac13(s_1+s_2+s_3),$$
we conclude that the line must go through $x_1$ and $x_2$ and has distance $h_3$ from $x_3$.
On the other hand, it is easy to check (and known) that the geometric median 
of the data points is the so-called Steiner point  
from which the points can be seen under an angle of $120^\circ$.
Clearly, the minimizing line does not pass the Steiner point.
\end{example}

\section{Numerical Examples} \label{sec:numerics}
In this section, we present various numerical examples. 
In particular, we compare our Algorithm \ref{alg:onedir} (with the geometric median $b$)
with standard PCA and the following methods:
\begin{itemize}
\item[i)] \textbf{PC-L1}: 
the greedy algorithm for minimizing \eqref{pcd_robust_3} proposed by Kwak \cite{kwak2008principal}.
As $b$ we used the geometric median computed by Weiszfeld's algorithm \ref{alg:onedir}.	

\item[ii)] \textbf{TRPCA}: 
the trimmed PCA of Podosinnikova et al. \cite{podosinnikova2014robust} with default parameters, i.e., the lower bound on the number of true observations is set to $\frac N2$ and the number of random restarts is  $10$.
Here, $b$ equals the mean of a certain subset of the given data determined within the algorithm.
\end{itemize}

	\subsection{Image Sequence with Slightly Varying Background}
	
		\begin{figure}
		\centering
		\begin{subfigure}{0.23\textwidth}
			\includegraphics[width=1\textwidth]{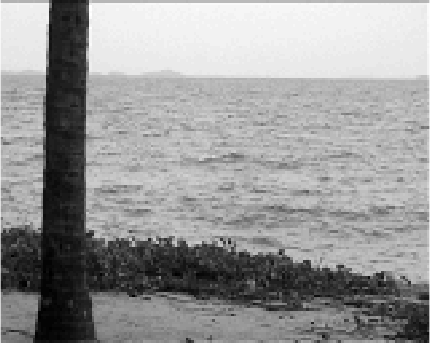}
			\caption{}\label{subfig:water_1}
		\end{subfigure}
		\begin{subfigure}{0.23\textwidth}
			\includegraphics[width=1\textwidth]{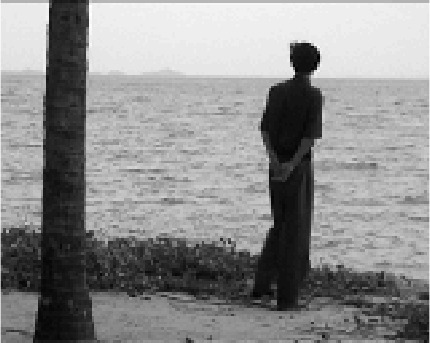}
			\caption{}\label{subfig:water_580}
		\end{subfigure}
		\begin{subfigure}{0.23\textwidth}
			\includegraphics[width=1\textwidth]{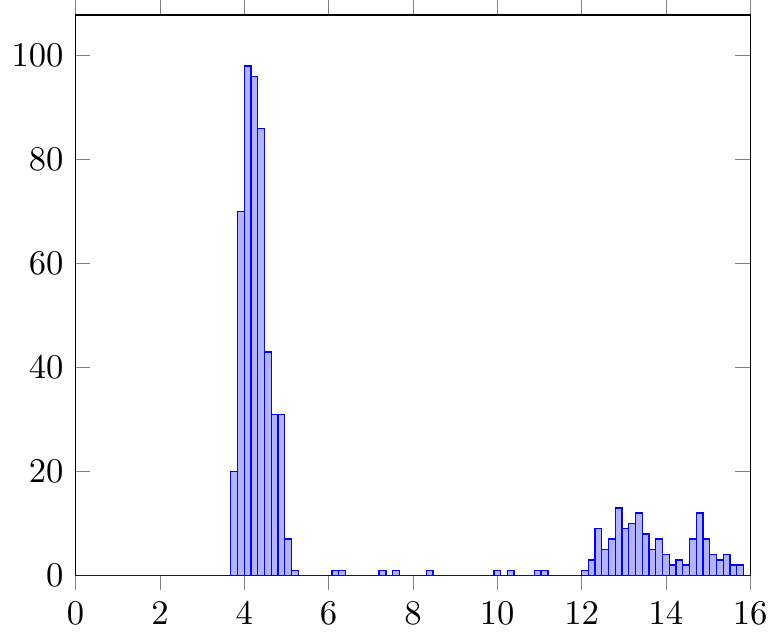}
			\caption{}\label{subfig:water_median_dist_hist}
		\end{subfigure}
		\begin{subfigure}{0.23\textwidth}
			\includegraphics[width=1\textwidth]{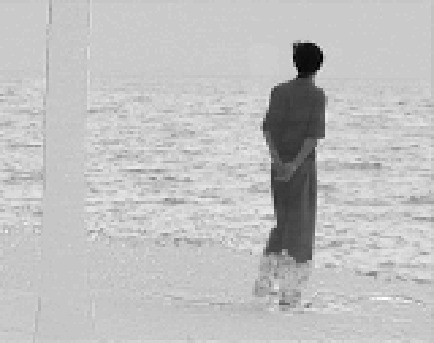}
			\caption{}\label{subfig:water_fg}
		\end{subfigure}
		\caption{Processing of the \emph{water front data set}. 
		\protect\subref{subfig:water_1} first frame showing the background of grass and sea 
		which slightly varies over the time, 
		\protect\subref{subfig:water_580} frame $580$ is an outlier frame with the person present, 			
		\protect\subref{subfig:water_median_dist_hist} 
		histogram of the distances of the frames 
		to their the geometric median, 
		\protect\subref{subfig:water_fg} difference between frame $580$ and the geometric median.}\label{fig:water}
	\end{figure}	
	
	We consider an image sequence with slightly varying background as it was used for object detection in various papers.
	The \emph{water front data set}, see Fig.~\ref{fig:water}, 
	was originally considered in \cite{li2004statistical}. 
	It was used for performance comparisons with several robust PCA methods 
	including those of Candes et al. \cite{candes2011robust} 
	in the context of object detection in \cite{podosinnikova2014robust}, 
	where TRPCA outperformed the other methods.
	The data set consists of $633$ frames of size $128\times 160$
	of a scenery with water and grass as background. 
	Beginning with frame $481$ a person walks into the scene, 
	which we consider as "outlier" frames in the data set. 
	We aim to detect the frames with the person present, 
	and then to separate background (scenery) and foreground (person).
	It turns out this can be achieved simply by thresholding the Euclidean distances of the vectorized data $x_i\in\R^{20480}$ to their geometric median. 
	The frames with the person in them can be detected from the histogram  in Fig.~\ref{subfig:water_median_dist_hist}. 
	More precisely, all frames with distance larger than $6$ 
	can be considered as outliers which exactly matches the frames with the person present. 
	The foreground in these images can then be extracted as the difference image to the geometric median 
	and subsequent pixelwise thresholding. 
	The difference image for one frame is given in Fig.~\ref{subfig:water_fg}.
	\\
	
In order to make the task more challenging   
and simulate a gradual change in lighting conditions, 
we alter the data as follows. 
Given the points $x_i$, $i=1,\ldots,633$, we created new data
	\begin{equation} \label{fake}
	\tilde x_i \coloneqq \frac 34 x_i + \frac{i}{8\cdot 633}(\boldsymbol 1 +x_i).
	\end{equation}
Here, outlier frames cannot be found by the previous method since the
distance of the frames from their geometric median varies by construction, 
see Fig.~\ref{fig:water_scaled_histo} (left). 
But a model with line fitting, i.e. with $K=1$,
is suitable by the construction of the data. 
Fig.~\ref{fig:water_scaled_histo} depicts the histogram of the distances of the frames $\tilde x_i$ from the line
generated by the standard PCA and by the residual minimizing robust PCA, respectively. 
The outliers can be better separated by the residual minimizing robust PCA 
as the frames belonging to the peak between $4$ and $5$ 
are less likely to be wrongfully mislabeled as outliers.
 
Finally, Fig.~\ref{fig:water_scaled_rec_res} shows the foreground--background separation in frame $i=580$ by various methods.
We show the projected data (background)
$x_{i,\mathrm{rec}} = a_1 a_1^\tT (x_i - b) + b$ (left)
and the residual (person)
$x_{i,\mathrm{res}} = x_i - x_{i,\mathrm{rec}}$, $i=580$. 
In the background and foreground of standard PCA, 
artifacts can be clearly seen at positions where the person rests for a longer time.
The PCA-L1 and our approach appear to be  more robust here, 
but the artifacts are more pronounced in the PCA-L1.
TRPCA with several restarts gives the best results.
	
	\begin{figure}
				\begin{subfigure}{0.32\textwidth}
			\includegraphics[width=1\textwidth]{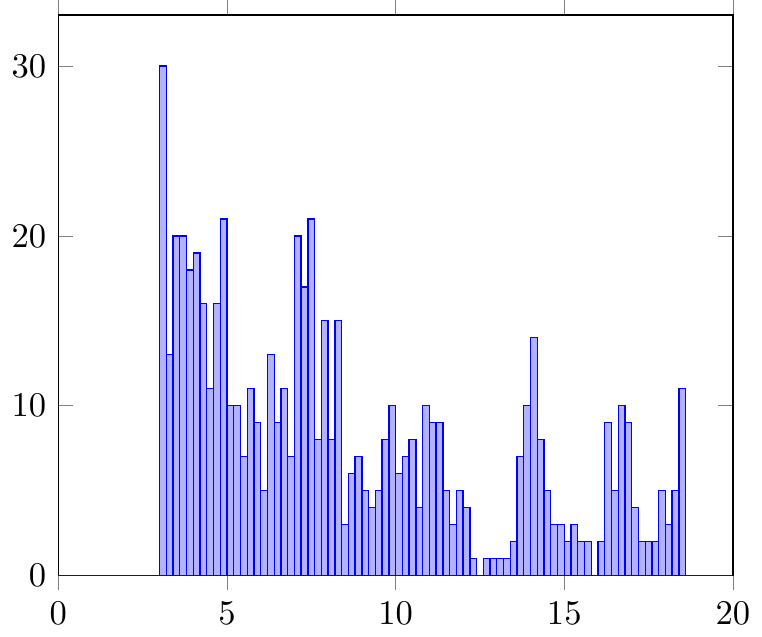}
	  		\end{subfigure}
		\begin{subfigure}{0.32\textwidth}
			\includegraphics[width=1\textwidth]{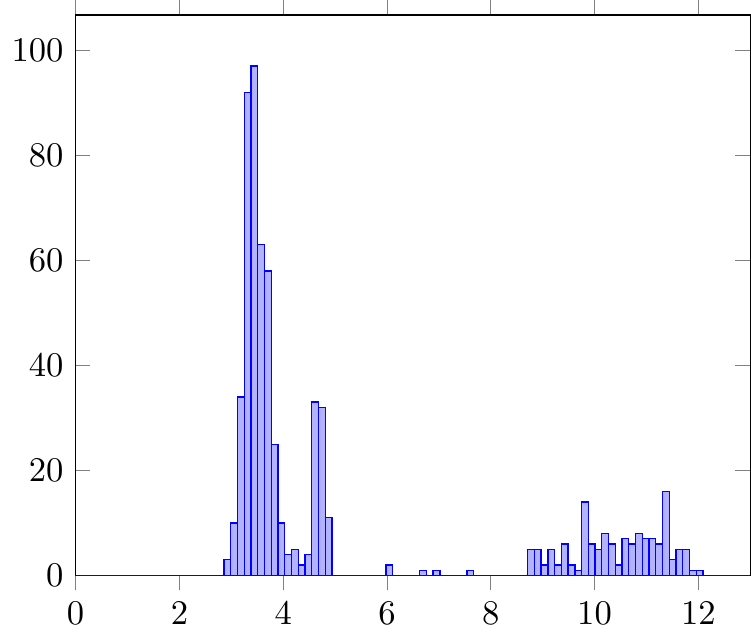}
		\end{subfigure}
			\begin{subfigure}{0.32\textwidth}
			\includegraphics[width=1\textwidth]{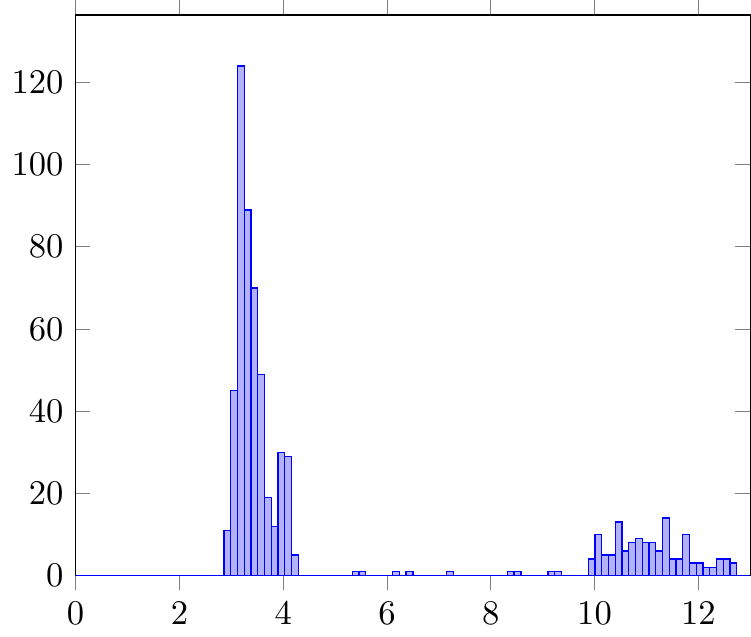}
			\end{subfigure}
		\caption{Distance histograms for the \emph{water front data set} modified by \eqref{fake}. 
		Left: distance of the frames to their median. 
		Middle: distance of the frames to the line fitted with standard PCA. 
		Right: distance of the frames to the line fitted with our approach.}
		\label{fig:water_scaled_histo}
	\end{figure}
	
	\begin{figure}
		\centering
		\begin{subfigure}{0.23\textwidth}
			\includegraphics[width=1\textwidth]{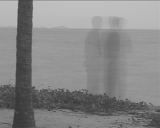}				
		\end{subfigure}
		\begin{subfigure}{0.23\textwidth}
			\includegraphics[width=1\textwidth]{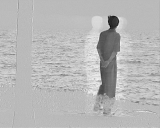}			
		\end{subfigure}\hspace{0.4cm}
		\begin{subfigure}{0.23\textwidth}
			\includegraphics[width=1\textwidth]{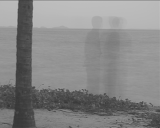}		
		\end{subfigure}
		\begin{subfigure}{0.23\textwidth}
			\includegraphics[width=1\textwidth]{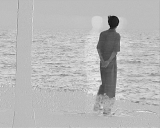}	
		\end{subfigure}\\ \vspace{0.3cm}
		\begin{subfigure}{0.23\textwidth}
			\includegraphics[width=1\textwidth]{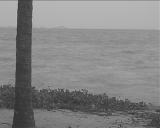}		
		\end{subfigure}
		\begin{subfigure}{0.23\textwidth}
			\includegraphics[width=1\textwidth]{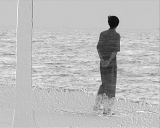}
		\end{subfigure}\hspace{0.4cm}
		\begin{subfigure}{0.23\textwidth}
			\includegraphics[width=1\textwidth]{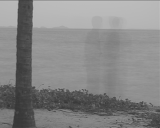}
			\end{subfigure}
		\begin{subfigure}{0.23\textwidth}
			\includegraphics[width=1\textwidth]{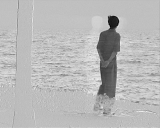}
				\end{subfigure}
		\caption{Projections on one dimensional subspace (background) 
		and residuals (foreground) of different approaches for the \emph{water front data set}. 
		Top left: standard PCA. 
		Top right: PCA-L1. 
		Bottom left: TRPCA.
		Bottom right: our approach.}
		\label{fig:water_scaled_rec_res}
	\end{figure}
		
	\subsection{Face reconstruction}
	For images of faces in the same pose but with different lighting, it may be assumed that they lie in a low dimensional subspace \cite{epstein1995}. 
	Thus standard PCA is a suitable tool to reduce the dimensionality of such data for classification and other tasks. 
	In practice, however, some of the face images may be occluded resulting in  outliers within the data. 
	Here, robust PCA methods appear to be more useful. 
	We test the performance of various approaches 
	with the cropped Extended Yale Face database B \cite{LHK2005}. 
	There are $58$ images of size $168\times 192$ of which $12$ 
	were altered with a $50\times 50$ square patch of noise at a random position, 
	see the left image in Fig.~\ref{fig:face_data} for an example.
	
	For noiseless face image data, standard PCA projection onto a subspace of dimension $K=5$ gives good results as shown on the right of Fig.~\ref{fig:face_data}.
	
	In Fig.~\ref{fig:face_rec} the projection of the noisy data on the subspace obtained by various approaches are shown. 
	As expected the noisy patches can be clearly seen in the reconstructions by standard PCA.
	Surprisingly, the result of  PCA-L1 looks worse than standard PCA, 
	as the influence of the noisy patches is even worse. 
	The results of TRPCA with several restarts are very similar to those of the standard PCA of the noiseless data 
	except of the right eye in the second image which appears to be too dark.
	This suggests that the algorithm successfully 
	excluded the outliers and calculated the principal components from a part of the noiseless data.
  	However, it should be mentioned that TRPCA sometimes fails to detect the outliers 
	as it depends on the initial values of the random restarts.
	The results of residual minimizing robust PCA demonstrate the robustness of the method to outliers
	although slight artifacts are still visible.
	
		\begin{figure}
		\centering
		\begin{subfigure}{0.23\textwidth}
			\includegraphics[width=1\textwidth]{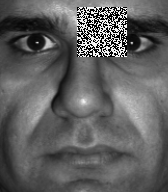}
		\end{subfigure}
		\begin{subfigure}{0.23\textwidth}
			\includegraphics[width=1\textwidth]{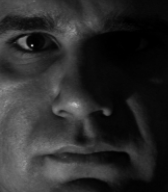}
		\end{subfigure}\hspace{0.4cm}
		\begin{subfigure}{0.23\textwidth}
			\includegraphics[width=1\textwidth]{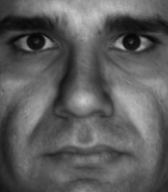}
		\end{subfigure}
		\begin{subfigure}{0.23\textwidth}
			\includegraphics[width=1\textwidth]{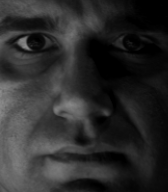}
		\end{subfigure}
		\caption{Standard PCA with $K=5$ for noiseless face data.
		Left two images: 
		Different face images, where a possible corruption is exemplified by the first one.
		Right two images: projections of the noiseless images onto 
		the $5$ dimensional subspace calculated by standard PCA computed from the 
		 noiseless data set.}
		\label{fig:face_data}
	\end{figure}
	
	\begin{figure}
		\centering
		\begin{subfigure}{0.23\textwidth}
			\includegraphics[width=1\textwidth]{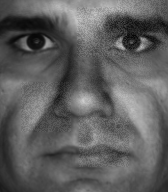}
					\end{subfigure}
		\begin{subfigure}{0.23\textwidth}
			\includegraphics[width=1\textwidth]{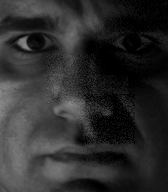}
				\end{subfigure}\hspace{0.4cm}
		\begin{subfigure}{0.23\textwidth}
			\includegraphics[width=1\textwidth]{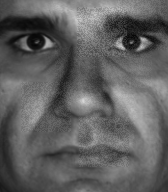}
				\end{subfigure}
		\begin{subfigure}{0.23\textwidth}
			\includegraphics[width=1\textwidth]{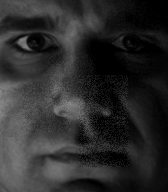}
					\end{subfigure}\\ \vspace{0.4cm}
		\begin{subfigure}{0.23\textwidth}
			\includegraphics[width=1\textwidth]{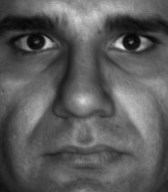}
				\end{subfigure}
		\begin{subfigure}{0.23\textwidth}
			\includegraphics[width=1\textwidth]{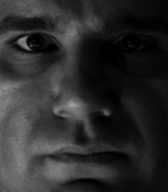}
					\end{subfigure}\hspace{0.4cm}
		\begin{subfigure}{0.23\textwidth}
			\includegraphics[width=1\textwidth]{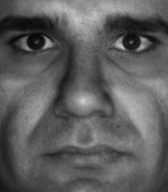}
					\end{subfigure}
		\begin{subfigure}{0.23\textwidth}
			\includegraphics[width=1\textwidth]{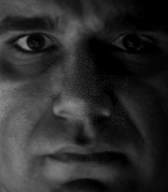}
			\end{subfigure}
		\caption{PCA methods for the noisy face data.
		Top left two images: standard PCA. 
		Top right two images: PCA-L1.
		Bottom left two images: TRPCA.
		Bottom right two images: our approach.}\label{fig:face_rec}
	\end{figure}
	
\section{Conclusions} \label{sec:concl}
We proposed a Weiszfeld-like algorithm to address the robust PCA problem arising 
from a minimal distance function of lines from points
and gave a circumvent convergence analysis of the algorithm.
We will generalize this to multiple directions by considering the minimization on Stiefel manifolds,
resp. Grassmannians, where we have already recognized that several methods proposed in the literature just coincide
from the point of view of Grassmannians.
Further extensions of our findings are possible such as
the treatment of robust independent component analysis (ICA) and PCA on manifolds, see, e.g. \cite{FJ2004,FLPJ2004,HZ2006,P2016,P2017,SLN2014}.
Another modification of PCA, the so-called sparse PCA couples the data term \eqref{PCA_1} with a sparsity term for $A \in \mathbb R^{d,K}$, see, e.g., \cite{HPZ16,LT13}
and could be considered under the robustness point of view.

\subsection*{Acknowledgments} 
        
	Funding by the German Research Foundation (DFG)  with\-in the Research Training Group 1932,
	project area P3, is gratefully acknowledged.
	
\appendix	
\section{Appendix: One-Sided Derivatives and Minimizers on Embedded Manifolds} \label{sec:app}
The \emph{one-sided directional derivative} of a function $f: \mathbb R^d \rightarrow \mathbb R$, $d \in \mathbb N$,
at a point $x \in \mathbb R^d$ in direction $h \in \mathbb R^d$ is defined by
\[
	Df(x;h) := \lim_{\alpha\downarrow 0}\frac{f(x+\alpha h) - f(x)}{\alpha}.
\]
Restricting $f$ to a submanifold $\mathcal{M} \subseteq \mathbb R^d$, we can restrict our considerations
to $h \in T_x \mathcal{M}$.
Recall that $\mathcal{M} \subseteq \mathbb R^d$ is an $m$-dimensional submanifold of $\mathbb R^d$ 
if for each point $x \in \mathcal{M}$ there exists an open neighborhood $U \subseteq \mathbb R^d$ 
as well as an open set $\Omega \subseteq \mathbb R^m$ 
and a so-called parametrization $\varphi \in C^1(\Omega, \mathbb R^d)$ of $\mathcal{M}$ 
with the properties
\begin{itemize}
\item[i)]
$\varphi(\Omega) = \mathcal{M} \cap U$, 
\item[ii)] $\varphi^{-1}: \mathcal{M} \cap U \rightarrow \Omega$ 
is surjective and continuous, and 
\item[iii)]
$D \varphi(x)$ has full rank $m$ for all $x \in \Omega$.
\end{itemize}

To establish the relation between one-sided directional derivatives and local minima 
of functions on manifolds we need the following lemma. A proof can be found in \cite[Lemma B.1]{NNSS19}.

\begin{lemma}\label{tangent_cone}
	Let $\mathcal{M}\subset \RR^{d}$ be an $m$-dimensional manifold of $\RR^{d}$.
	Then the tangent space $T_{x}\mathcal{M}$ and the tangent cone
	\[
		\mathcal{T}_x\mathcal{M} 
		:= \left\{ u\in\RR^{d}:\ \exists \textup{ sequence } (x_k)_{k\in\NN}\subset\mathcal{M}\setminus \{x\} \textup{ with } x_k\rightarrow x \textup{ s.t. } 
		\frac{x_k-x}{\|x_k-x\|}\rightarrow\frac{u}{\|u\|}\right\} \cup \{0\}
	\]
	coincide.
\end{lemma}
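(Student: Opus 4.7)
The plan is to prove both inclusions using the parametrization $\varphi \in C^1(\Omega, \mathbb{R}^d)$ around $x$, exploiting that $D\varphi(t_0)$ has full rank so it is injective with a bounded left inverse on its image. Write $t_0 := \varphi^{-1}(x)$ and recall the first-order expansion $\varphi(t) - \varphi(t_0) = D\varphi(t_0)(t - t_0) + r(t)$ with $\|r(t)\|/\|t-t_0\| \to 0$ as $t \to t_0$. Since $D\varphi(t_0)$ has full rank $m$, there is $c > 0$ with $\|D\varphi(t_0) s\| \ge c\|s\|$ for all $s \in \mathbb{R}^m$; this quantitative injectivity will be the main workhorse.

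For the inclusion $T_x\mathcal{M} \subseteq \mathcal{T}_x\mathcal{M}$, I take $u \in T_x\mathcal{M}$ nonzero, write $u = D\varphi(t_0) v$ for some $v \in \mathbb{R}^m \setminus \{0\}$, and set $x_k := \varphi(t_0 + v/k) \in \mathcal{M}$ for $k$ large. The expansion gives $x_k - x = u/k + o(1/k)$, so $x_k \neq x$ for $k$ large and $(x_k - x)/\|x_k - x\| \to u/\|u\|$, placing $u$ in $\mathcal{T}_x\mathcal{M}$.

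For the reverse inclusion $\mathcal{T}_x\mathcal{M} \subseteq T_x\mathcal{M}$, I take $u \in \mathcal{T}_x\mathcal{M}$ nonzero with witness sequence $x_k \to x$ in $\mathcal{M}\setminus\{x\}$. Pulling back by $\varphi^{-1}$ (continuous by ii)), set $t_k := \varphi^{-1}(x_k) \to t_0$ and $s_k := t_k - t_0 \neq 0$. From the expansion and the lower bound $\|D\varphi(t_0) s_k\| \ge c\|s_k\|$, the ratio $\|s_k\|/\|x_k - x\|$ stays bounded. After passing to a subsequence, $s_k/\|s_k\| \to v$ for some unit $v \in \mathbb{R}^m$, and the expansion yields $(x_k-x)/\|x_k - x\| \to D\varphi(t_0)v / \|D\varphi(t_0)v\|$. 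Matching with the definition forces $u/\|u\| = D\varphi(t_0)v / \|D\varphi(t_0) v\|$, so $u$ is a positive multiple of $D\varphi(t_0) v$ and hence lies in $T_x\mathcal{M}$.

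I expect the main subtlety to be the second inclusion: one must simultaneously control that $t_k$ actually exists and converges (needing the continuity of $\varphi^{-1}$ from ii)), and that the direction $s_k/\|s_k\|$ does not degenerate or spin wildly, which is where the full-rank bound on $D\varphi(t_0)$ combined with a subsequence extraction is essential. Everything else reduces to the first-order Taylor expansion of $\varphi$ and normalization, and the case $u = 0$ is trivial by the explicit $\cup \{0\}$ in the definition of $\mathcal{T}_x\mathcal{M}$.
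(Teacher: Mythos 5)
Your proof is correct and complete: both inclusions are handled properly, the quantitative lower bound $\|D\varphi(t_0)s\|\ge c\|s\|$ from the full-rank condition iii) is exactly what is needed to keep $\|s_k\|/\|x_k-x\|$ bounded, and the continuity of $\varphi^{-1}$ from ii) is correctly invoked to obtain $t_k\to t_0$. The paper itself does not prove this lemma but only cites an external reference, and your parametrization-plus-Taylor-expansion argument is the standard route such a proof takes, so there is nothing to flag.
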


The following theorem gives a general necessary and sufficient condition
for local minimizers of Lipschitz continuous functions on embedded manifolds 
using the notation of one-sided derivatives.
For the Euclidean setting $\mathcal{M} = \mathbb R^d$,
the first relation of the proposition is trivially fulfilled for any function  $f: \mathbb R^d \rightarrow \mathbb R$,
while a proof of the sufficient minimality condition in the second part  was given in \cite{Ben-Tal1985}.
Moreover, the authors of \cite{Ben-Tal1985} gave an example
that Lipschitz continuity in the second part cannot be weakened to just continuity.

\begin{theorem}\label{prop:loc_min}
        Let $\mathcal{M} \subset \mathbb R^d$ be an $m$-dimensional submanifold of $\mathbb R^d$
        and $f: \mathbb R^d \rightarrow \mathbb R$ a locally Lipschitz continuous function.
        Then the following holds true:
        \begin{enumerate}
	\item If  $\hat x \in \mathcal{M}$ is a local minimizer of  $f$ on $\mathcal{M}$, then
		$Df(\hat x;h) \ge 0$ for all $h \in T_{\hat x}\mathcal{M}$. 
	\item
	If
	$Df(\hat x;h) >0$ for all $h\in T_{\hat x} \mathcal{M} \setminus \{\mathbf{0}\}$,
	then $\hat x$ is a strict local minimizer of $f$ on $\mathcal{M}$.
	\end{enumerate}
\end{theorem}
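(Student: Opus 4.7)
The plan is to prove the two parts separately, both leveraging Lemma~\ref{tangent_cone} to bridge between the tangent space $T_{\hat x}\mathcal{M}$ (which appears in the statement) and the tangent cone $\mathcal{T}_{\hat x}\mathcal{M}$ (which has a natural geometric characterization via sequences).

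For part 1, I would take any $h \in T_{\hat x}\mathcal{M}$ and use Lemma~\ref{tangent_cone} to realize $h/\|h\|$ as the limit of normalized secants $(x_k-\hat x)/\|x_k-\hat x\|$ for some sequence $x_k \in \mathcal{M}\setminus\{\hat x\}$ with $x_k \to \hat x$. Setting $\alpha_k := \|x_k-\hat x\|/\|h\|$, this forces $\|x_k - \hat x - \alpha_k h\| = o(\alpha_k)$. Local Lipschitz continuity of $f$ near $\hat x$ with some constant $L$ then yields $|f(x_k) - f(\hat x + \alpha_k h)| \le L\,\|x_k - \hat x - \alpha_k h\| = o(\alpha_k)$. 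Combining this with $f(x_k) \ge f(\hat x)$ (from local minimality of $\hat x$ on $\mathcal{M}$), dividing by $\alpha_k > 0$, and letting $k\to\infty$ along the assumed existing limit yields $Df(\hat x; h) \ge 0$.

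For part 2, I would argue by contraposition. Assume $\hat x$ is not a strict local minimizer, so that there exists a sequence $x_k \in \mathcal{M}\setminus\{\hat x\}$ with $x_k \to \hat x$ and $f(x_k) \le f(\hat x)$. Since the unit vectors $(x_k - \hat x)/\|x_k - \hat x\|$ lie in the compact unit sphere of $\mathbb{R}^d$, a subsequence (not relabeled) converges to some unit vector $v$. By Lemma~\ref{tangent_cone}, $v \in T_{\hat x}\mathcal{M}\setminus\{0\}$. With $\alpha_k := \|x_k - \hat x\|$, the same Lipschitz-based estimate as in part 1 gives $|f(x_k) - f(\hat x + \alpha_k v)| = o(\alpha_k)$, so
\[
\frac{f(\hat x + \alpha_k v) - f(\hat x)}{\alpha_k} \le \frac{f(x_k) - f(\hat x)}{\alpha_k} + o(1) \le o(1).
\]
Since $\alpha_k \downarrow 0$, passing to the limit yields $Df(\hat x; v) \le 0$, which contradicts the hypothesis $Df(\hat x; v) > 0$.

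The main obstacle, and the only real work, is justifying the Lipschitz transfer $|f(x_k) - f(\hat x + \alpha_k h)| = o(\alpha_k)$ that allows one to switch between the on-manifold probe $x_k$ (which carries the minimality or non-minimality information) and the rectilinear probe $\hat x + \alpha_k h$ (which appears in the definition of $Df$). The key point is that the tangent cone characterization delivers \emph{first-order} closeness $\|x_k - \hat x - \alpha_k h\| = o(\alpha_k)$, and then Lipschitz continuity converts this into a $o(\alpha_k)$ bound on the function values—a bound strong enough to survive division by $\alpha_k$. Mere continuity of $f$ would only furnish $o(1)$ on the function-value side, which is not enough, and this is consistent with the example in \cite{Ben-Tal1985} showing that Lipschitz continuity cannot be weakened in part 2.
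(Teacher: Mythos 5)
Your proof is correct, and it follows what is evidently the intended route: the paper itself does not reproduce a proof (it defers to \cite[Thm.~6.1]{NNSS19}), but the placement of Lemma~\ref{tangent_cone} immediately before the statement signals exactly your strategy of identifying $T_{\hat x}\mathcal{M}$ with the tangent cone and transferring first-order information between the on-manifold sequence and the rectilinear ray via the local Lipschitz bound. Your key estimate $\|x_k-\hat x-\alpha_k h\|=o(\alpha_k)$ and its conversion to an $o(\alpha_k)$ bound on function values is precisely the point where Lipschitz continuity (rather than mere continuity) is needed, consistent with the paper's remark on \cite{Ben-Tal1985}.
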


A proof can be found in \cite[Thm. 6.1]{NNSS19} along with an example which demonstrates the necessity of the Lipschitz continuity of $f$ in the manifold setting in the first part of the theorem. Furthermore, note that $Df(\hat x;h) \ge 0$ for all $h\in T_{\hat x} \mathcal{M} \setminus \{\mathbf{0}\}$ does not imply that $\hat x$ is a local minimizer of $f$ on $\mathcal{M}$.

\bibliographystyle{abbrv}
\bibliography{refs_robpca}	
\end{document}